\pgfplotsset{compat=1.16}
\newtheorem{defn}{Definition}[section]
\newtheorem{thm}{Theorem}[section]
\newtheorem{prop}{Proposition}[section]
\newtheorem{lem}{Lemma}[section]
\newtheorem{rem}{Remark}[section]
\newcommand{\R}{\mathbb{R}}
\newcommand{\N}{\mathbb{N}}
\newcommand{\Prob}{\mathcal{P}}
\newcommand{\Lip}{\mathrm{Lip}}
\newcommand{\W}{\mathcal{W}}
\newcommand{\K}{\mathcal{K}}
\newcommand{\spt}{\mathop\mathrm{spt}}
\renewcommand{\L}[1]{L^{\pmb #1}}
\def\XXint#1#2#3{{\setbox0=\hbox{$#1{#2#3}{\int}$}
\vcenter{\vspace{-1pt}\hbox{$#2#3$}}\kern-.5\wd0}}
\def\Xint#1{\mathchoice {\XXint\displaystyle\textstyle{#1}}{\XXint\textstyle\scriptstyle{#1}}{\XXint\scriptstyle\scriptscriptstyle{#1}}{\XXint\scriptscriptstyle\scriptscriptstyle{#1}}\!\int}
\def\klintmed{\Xint{\hbox{\vrule height -0pt width 6pt depth 1pt}}}
\title[A conservation law with nonlocal flux]{Measure solutions, smoothing effect, and deterministic particle approximation for a conservation law with nonlocal flux}
\author{M. Di Francesco \and S. Fagioli \and E. Radici}
\begin{document}
\address{ Marco Di Francesco - DISIM - Department of Information Engineering, Computer Science and Mathematics, University of L'Aquila, Via Vetoio 1 (Coppito)
67100 L'Aquila (AQ) - Italy}
\email{marco.difrancesco@univaq.it}

\address{ Simone Fagioli - DISIM - Department of Information Engineering, Computer Science and Mathematics, University of L'Aquila, Via Vetoio 1 (Coppito)
67100 L'Aquila (AQ) - Italy}
\email{simone.fagioli@univaq.it}

\address{ Emanuela Radici - DISIM - Department of Information Engineering, Computer Science and Mathematics, University of L'Aquila, Via Vetoio 1 (Coppito)
67100 L'Aquila (AQ) - Italy}
\email{emanuela.radici@univaq.it}

\begin{abstract}
    We consider a class of nonlocal conservation laws with an interaction kernel supported on the negative real half-line and featuring a decreasing jump at the origin. We provide, for the first time, an existence and uniqueness theory for said model with initial data in the space of probability measures. Our concept of solution allows to sort a lack of uniqueness problem which we exhibit in a specific example. Our approach uses the so-called \emph{quantile}, or \emph{pseudo-inverse} formulation of the PDE, which has been largely used for similar types of nonlocal transport equations in one-space dimension. Partly related to said approach, we then provide a deterministic particle approximation theorem for the equation under consideration, which works for general initial data in the space of probability measures with compact support. As a crucial step in both results, we use that our concept of solution (which we call \emph{dissipative measure solution}) implies an instantaneous \emph{measure-to-$L^\infty$} smoothing effect, a property which is known to be featured as well by local conservation laws with genuinely nonlinear fluxes.
\end{abstract}

\maketitle

\section{Introduction}
This paper deals with the class of nonlocal conservation laws
\begin{equation}\label{eq:main}
    \partial_t \rho + \partial_x (\rho W[\rho(\cdot,t)]) = 0\,,\quad x\in \R\,,\quad t\geq 0\,.
\end{equation}
We provide a concept of solution to \eqref{eq:main} for initial data $\rho_0$ in the space $\Prob(\R)$ of probability measures on $\R$ yielding existence and uniqueness of solutions to the Cauchy problem. 
Here $W$ is a nonlocal operator
\[W:\Prob(\R)\rightarrow L^\infty_{\mathrm{loc}}(\R)\]
defined as follows. We first introduce the \emph{interaction kernel} $V:\R\rightarrow \R$
that we assume to satisfy the following set of standing assumptions:
\begin{itemize}
    \item [(V1)] $\mathrm{supp}(V)\subset(-\infty,0]$;
    \item [(V2)] $V$ is left-continuous at $0$ with $V(0^-)=\lambda>0$.
    \item [(V3)] $V$ is Lipschitz continuous on $(-\infty,0]$.
\end{itemize}
We now introduce the \emph{velocity map} $v:\R\rightarrow \R$ with the assumptions
\begin{itemize}
    \item [(v1)] $v\in \Lip(\R)$;
    \item [(v2)] There exists $b>0$ such that for all $\rho_2\geq\rho_1\geq 0$ one has $v(\rho_2)-v(\rho_1)\leq -b(\rho_2-\rho_1)$.
\end{itemize}
We observe that assumptions (v2) could be relaxed in some cases, see Remark \ref{rem:assumptions} below. We now introduce the nonlocal operator $W$. Given $\rho \in \mathcal{P}(\R)$, we set
\begin{equation}\label{eq:Wh}
    W[\rho](x)=v((V\ast \rho) (x))=v\left(\int_{\R}V(x-y)d\rho(y)\right)\,.
\end{equation}
For simplicity in the notation, in case $\rho$ is absolutely continuous with respect to the Lebesgue measure, we identify the measure with its density, that is, we write $\rho$ instead of $\rho \mathcal{L}^1$.

Nonlocal conservation laws of the form \eqref{eq:main} (along with their various versions and extensions) are gaining more and more attention in the scientific community, mostly due to the fact that they are a very flexible tool to model nonlinear transport. In fact, they are used to  describe various phenomena in natural and social sciences such as crowd movements \cite{pedestrian} and sedimentation \cite{sedimentation}, as well as to model several processes of great interest in engineering and technology such as vehicular traffic \cite{traffic}, supply chains \cite{supply}, and conveyor belts \cite{conveyor}. 

The Cauchy problem for models of the form \eqref{eq:main} with absolutely continuous initial data has been extensively studied in the literature. For the existence and uniqueness theory via finite volume scheme we refer to \cite{goatin_scialanga,chiarello_goatin,chiarello_goatin_rossi}. In this context, initial data are typically taken in $BV(\R)$. Existence in $L^\infty$ via fixed point has been obtained in \cite{keimer_pflug_2017}. A more general result in which the kernel $V$ is $BV$ was obtained in \cite{coclite_denitti_keimer_pflug_2022}. An approach using Wasserstein distances and a deterministic particle approach was addressed in \cite{goatin_rossi}. Very often, especially in traffic flow modelling, the nonlocal operator in \eqref{eq:Wh} is replaced by
\begin{equation}\label{eq:Wh2}
    W_\eta[\rho]=v\left(\int_{x}^{x+\eta}V(x-y)d\rho(y)\right)\,,
\end{equation}
with $V\geq 0$, to emphasise that each vehicle can only interact with vehicles ahead of it and with distance not larger than a fixed $\eta>0$. The kernel $V=V_\eta$ may depend on the proximity parameter $\eta$. In most cases, the supremum of the interaction kernel $V$ blows up like $\eta^{-1}$ for small $\eta>0$, in such a way to make the convolution \[\int_{x}^{x+\eta}V_\eta(x-y)d\rho(y)\] a suitable candidate to approximate $\rho(x)$ in a distributional sense for small $\eta$, and hence the equation \eqref{eq:main} a suitable candidate to approximate the local conservation law 
\begin{equation}\label{eq:local}
    \partial_t \rho + \partial_x (\rho v(\rho))=0
\end{equation}
for small $\eta$. These considerations led some researchers in this field to focus both on the existence theory for \eqref{eq:main} and on its interplay with the local conservation laws when $W[\rho]$ is given by \eqref{eq:Wh2} and as $\eta\searrow 0$. In particular, the latter problem is extremely interesting and challenging and led to very deep results, see for example \cite{CoCrSp2019,bressan_shen_2020,CoCrMaSp2021,CoCrMaSp2023,coclite_colombo_crippa_denitti_keimer_marconi_pflug_spinolo,keimer_pflug_2023_2} (the list is intentionally non-exhaustive as the $\eta\searrow 0$ limit is not a point we address in this paper). 

The use of nonlocality in the transport part eases (apparently) the mathematical theory of the equation $\partial_t \rho + \partial_x (\rho v)=0$ with respect to the more classical approach in which $v$ depends locally on $\rho$ (see for example the classical LWR model for traffic flow \cite{LighthillWhitham}), in that the nonlocal vector field in \eqref{eq:Wh} is continuous in a weaker topology when compared to its local counterpart $v(\rho)$. However, the mathematical theory of \eqref{eq:main} is still challenging due to the possible discontinuity of the kernel $V$. We mention at this stage that a parallel literature was developed in the past 20 years on the so called \emph{nonlocal interaction equation} $\partial_t \rho+ \mathrm{div}(\rho \nabla W \ast \rho)=0$, in which $W$ is typically considered radially symmetric with a possible singularity at the origin, see for example \cite{AGS,CDFLS,carrillo_choi_hauray}. Said equations have applications in various fields including cells biology \cite{keller_segel}, models for swarms \cite{topaz,mogilner}, collective motion \cite{CCR}, granular media flow \cite{BenedettoCagliotiPulvirenti97}, and opinion formation \cite{sznajd}. We refer to \cite{AGS,CDFLS,carrillo_choi_hauray} as the main references for existence and uniqueness results in a measure sense (the list is far for being exhaustive). 

In one space dimension (see e.g. \cite{Burger_DF_NHM,Bonaschi_Carrillo_DF_Peletier} and the recent \cite{DF_Schmidtchen_Iorio}), nonlocal interaction equations may be expressed in a very convenient way using the pseudo-inverse representation of the cumulative distribution function (see Subsection \ref{subsec:wasserstein} below), which turns out to be very useful for \eqref{eq:main} as well. One of the starting motivations of the present manuscript is actually that of expressing \eqref{eq:main} in said pseudo-inverse (or quantile) formulation. We briefly anticipate here the key aspects of this approach, which 
dates back to \cite{carrillo_toscani_wasserstein,carrillo_gualdani_toscani}, and refer to Subsections \ref{subsec:wasserstein}, \ref{subsec:continuity}, \ref{subsec:non_uniqueness}, and \ref{subsec:new} below for further details. For a given probability measure $\rho\in \mathcal{P}(\R)$, we define its cumulative distribution $F_\rho:\R\rightarrow \R$
\[F_\rho(x)=\rho((-\infty,x])\]
and the pseudo-inverse (or quantile function) $X_\rho:[0,1]\rightarrow \R$
\begin{equation*}
  X_\rho(z)=\inf\left\{x\in \R\,:\,\,F_\rho(x)\geq z\right\}\,.  
\end{equation*}
Assuming $\rho(\cdot,t)$ solves \eqref{eq:main} in a classical sense, we denote by $F_\rho(\cdot,t)$ the cumulative distribution of $\rho(\cdot,t)$ and by $X_\rho(\cdot,t)$ the pseudo-inverse of $F_\rho(\cdot,t)$. We assume for simplicity that $F_\rho$ is strictly increasing on $\R$ for fixed times and has no jumps, so that $X_\rho(\cdot,t)$ is the actual inverse of $F_\rho(\cdot,t)$. We also assume enough regularity on $\rho$, $F_\rho$, $X_\rho$ and on the given functions $v$ and $V$. We then formally recover, after a few computations involving chain rules, the following equation satisfied by $X_\rho(z,t)$, 
\begin{equation}\label{eq:quantile_intro0}
    \partial_t X_\rho(z,t)=v\left(\int_0^1 (V(X_\rho(z,t)-X_\rho(\zeta,t))d\zeta \right)\,,\qquad (z,t)\in [0,1]\times [0,+\infty)\,.
\end{equation}
We remark at this stage that $X_\rho$ is strictly increasing if $\rho$ has no atoms, whereas $X_\rho$ is constant on a sub-interval of $[0,1]$ in case $\rho$ has an atomic part. For further details we refer to Subsection \ref{subsec:wasserstein}. The rigorous correspondence between \eqref{eq:main} and \eqref{eq:quantile_intro0} is discussed in Subsections \ref{subsec:continuity} and \ref{subsec:non_uniqueness}. To bypass the discontinuity of the kernel $V$ at the origin, we will also consider the following alternative formulation for \eqref{eq:quantile_intro0}, the derivation of which is contained in Subsection \ref{subsec:non_uniqueness} below,
\begin{equation}\label{eq:quantile_intro}
    \partial_t X_\rho(z,t)=v\left(\int_0^1 (U(X_\rho(z,t)-X_\rho(\zeta,t))d\zeta - \lambda z\right)
\end{equation}
where $U:\R\rightarrow \R$ is defined by
\[U(x)=\begin{cases}
    V(x) & x\leq 0\\
    \lambda & x>0
\end{cases}\]
with $\lambda=V(0^-)$ as in the assumption (V2).

The equation \eqref{eq:quantile_intro0} may be seen as an integro-differential equation in a suitable functional framework for the $X_\rho$ variable, in which $X_\rho$ is assumed to be non-decreasing with respect to $z$. The typical choice for the functional space is $L^p([0,1])$ for $p\in [1,+\infty]$. In particular, requiring $X_\rho(\cdot,t)\in L^p([0,1])$ for finite $p\geq 1$ corresponds to requiring $\rho(\cdot,t)$ to have finite moment of order $p$. Clearly, as long as the velocity map $v$ and the kernel $V$ are assumed to be globally Lipschitz on the whole $\R$, existence and uniqueness for \eqref{eq:quantile_intro0} easily follow from a suitable $L^p([0,1])$-version of Cauchy-Lipschitz-Picard theorem, as the operator on the right-hand side of \eqref{eq:quantile_intro0} is in that case a Lipschitz operator on $L^p([0,1])$. Such an approach cannot be applied directly in case $V$ has a discontinuity as in our case due to (V1)-(V2). Our alternative formulation \eqref{eq:quantile_intro} allows us to bypass this problem in our setting. It is worth mentioning at this stage that the case of Lipschitz kernel $V$ is covered in the theory developed in \cite{crippa_lecureux} in a multidimensional framework (thus, not based on the pseudo-inverse approach).

A first property of \eqref{eq:main} we will detect in this paper is that a jump discontinuity of the kernel $V$ combined with an initial datum $\rho_0$ with a \emph{nontrivial atomic part} results into a possible \emph{lack of uniqueness} of solutions to the Cauchy problem for \eqref{eq:main} in a weak measure sense. Indeed, a simple computation in the case $v(\rho)=1-\rho$, $V(x)=\mathbf{1}_{(-\infty,0]}(x)$ (see Subsection \ref{subsec:non_uniqueness} below) proves that if $\rho$ satisfies \eqref{eq:main}, then the cumulative distribution $F=F_\rho$ formally satisfies the well-known Burgers equation
\[\partial_t F + \partial_x (F^2/2)=0\,.\]
Hence, the initial datum $\rho_0=\delta_0$, corresponding to the initial discontinuous cumulative distribution  $F(x,0)=\mathbf{1}_{[0,+\infty)}(x)$, implies the existence of multiple solutions in the $F$-landscape. As is well known in the classical theory of conservation laws (see \cite{oleinik,kruzkov}), the unique entropy solution $F(x,t)$ is, in this case, the one in which the initial discontinuity at $x=0$ dissolves into a rarefaction wave, which in terms of $\rho=F_x$ means that the initial Delta singularity in $\rho_0$ immediately disappears and the solution $\rho(\cdot,t)$ becomes $L^\infty$ instantaneously for $t>0$. We stress at this stage that no lack of uniqueness is featured by \eqref{eq:main} in the case of initial conditions which are absolutely continuous with respect to the Lebesgue measure. This non uniqueness may only occur if we allow for concentrated initial data.

Based on the above considerations, one of our contribution to this theory will be setting up a well-posedness theory for  \eqref{eq:main} with initial datum  $\rho_0$ in the space of probability measures on $\R$ automatically selecting the unique solution featuring the above described \emph{measure-to-$L^\infty$} smoothing effect. A similar issue was tackled in \cite{Bonaschi_Carrillo_DF_Peletier} for linear $v$ and $V(x)=\pm\mathrm{sign}(x)$. We will refer to said solution as  \emph{dissipative measure solution}. We will then rely on \eqref{eq:quantile_intro0} and its modified version \eqref{eq:quantile_intro} to formulate our existence and uniqueness result
and prove that if we start with an arbitrary non-increasing initial condition for $X_\rho$ in \eqref{eq:quantile_intro} (corresponding to a measure initial condition for $\rho$), $X_\rho(\cdot,t)$ becomes strictly increasing for positive times, with a $z$-slope which is controlled from below by a (time depending) positive constant. This will imply that the corresponding measure $\rho(\cdot,t)$ solving \eqref{eq:main} is absolutely continuous with respect to the Lebesgue measure for all positive times. All these arguments together lead to prove existence, uniqueness, and stability for \eqref{eq:main} in a probability measure framework. 

The desired instantaneous measure-to-$L^\infty$ smoothing effect for \eqref{eq:main} will be included in our main result. The interest towards this property is that, despite the nonlocal model \eqref{eq:main} and the local one \eqref{eq:local} are quite different, the \emph{local} conservation law \eqref{eq:local} satisfies somehow a similar smoothing effect, see \cite{liu_pierre}. Apart from that, this property may be also interesting from the applications point of view, for example in vehicular traffic, to model situations in which initially very large concentrations of vehicles are subject to instantaneous (singular)  \emph{repulsive} effects, forcing consecutive vehicles to adjust to a 
distance of the same order as the mass of a single vehicle. We stress that the discontinuity of the kernel $V$ at the origin plays a crucial role to obtain such an effect. 

As is clear in traffic flow models, the interpretation of continuum models from the point of view of \emph{moving particles} is crucial to understanding the intuition behind these models at a microscopic scale, as well as to validate the use of continuum models for models which are intrinsically discrete in their formulation. Hence, another goal of this paper is to formulate a \emph{deterministic particle approximation} for \eqref{eq:main}. A similar task has been performed in \cite{goatin_rossi} with bounded initial data. Our goal is to do so for measure initial conditions, with the hope of catching, at a discrete particle level, the same smoothing effect emphasised above. The discrete particle counterpart of \eqref{eq:main} we consider here is the ODE system
\begin{equation}\label{eq:particle_intro}
   \dot{x}_i(t)= v\left(m_N \sum_{k=i+1}^N V(x_i(t)-x_k(t))\right)\,,\qquad i=0,\ldots,N\,, 
\end{equation}
with $m_N=1/N$ and with the convention $V(0)=V(0^-)=\lambda$. We notice that in \eqref{eq:particle_intro} each particle $x_i$ interacts with all particles $x_k$ with index $k>i$. This set consists of all particles $x_k$ with position strictly larger than $x_i$ plus the particles $x_k$ with same position as $x_i$ and with index $k>i$. Such a choice in our scheme may seem artificial but is indeed quite natural given similar results in \cite{Bonaschi_Carrillo_DF_Peletier,DF_Schmidtchen_Iorio} and is somehow inspired by the quantile formulation \eqref{eq:quantile_intro0}.

In case $V$ is smooth on the whole $\R$, assuming the particles trajectories $x_i(t)$ satisfy \eqref{eq:particle_intro}, proving that
\begin{equation}\label{eq:empirical}
    \rho^N(t)=m_N\sum_{i=1}^N \delta_{x_i(t)}
\end{equation}
solves \eqref{eq:main} in a distributional sense is an easy exercise. The presence of a discontinuity in $V$ has been considered in the case $v(\rho)=-\rho$ and $V(x)=\pm\mathrm{sign}(x)$ in \cite{Bonaschi_Carrillo_DF_Peletier}, see also \cite{DF_Schmidtchen_Iorio} for the case $V(x)=-\mathrm{sign}(x) e^{-|x|}$. The use of a deterministic particle approximation to solve a local scalar conservation law in the entropy sense \eqref{eq:local} was first introduced in \cite{DF_rosini,DF_fagioli_rosini}, see also \cite{holden1,holden2}. We stress that the first use of deterministic particles to solve a one-dimensional transport PDE dates back to \cite{russo} for the heat equation, see also \cite{gosse} for the porous medium equation. In many of the aforementioned cases, the reconstruction of the measure \eqref{eq:empirical} from the moving particles $x_1,\ldots,x_N$ is no use to pass the particle scheme to the $N\rightarrow +\infty$ limit. Indeed, the best compactness one can get on $\rho^N$ is in a weak measure topology, whereas at least weak $L^1$ compactness is often needed in the models under study (or even strong $L^1$ compactness, as is the case, for example, for the local conservation law \eqref{eq:local}). Hence, we shall instead consider the discrete formulation for the density
\begin{equation}\label{eq:piecewise}
    \rho^N(x,t)=\sum_{i=0}^{N-1} \mathbf{1}_{[x_i(t),x_{i+1}(t))}(x)\frac{m_N}{x_{i+1}(t)-x_i(t)}
\end{equation}
which is constructed out of $N+1$ moving particles $x_0(t),\ldots,x_N(t)$, see Section \ref{sec:DPA} below.


\medskip
We summarise below the content of the paper and the main results. 
\begin{itemize}
    \item In Section \ref{sec:preliminaries} we first provide preliminaries on $p$-Wasserstein spaces (Subsection \ref{subsec:wasserstein}) and we recall basic concepts on the one-dimensional continuity equation (Subsection \ref{subsec:continuity}). Then, in Subsection \ref{subsec:non_uniqueness} we provide an example in which multiple solutions may arise for the Cauchy problem of \eqref{eq:main}, which justifies the need of our definition of dissipative measure solutions in Subsection \ref{subsec:dissipative}, see Definition \ref{def:dissipative} (after having introduced the quantile re-formulation of the model in Subsection \ref{subsec:new}). We state our main existence and uniqueness result of dissipative measure solutions with initial data in the $p$-Wasserstein space in Theorem \ref{thm:main}. In the same Theorem we also state the main measure-to-$L^\infty$ smoothing property \eqref{eq:smoothing}. In Subsection \ref{subsec:DPA} we introduce our deterministic particle approximation and state our many particle limit result in Theorem \ref{thm:DPA}.
    \item In Section \ref{sec:proof_main} we prove Theorem \ref{thm:main}. The measure-to-$L^\infty$ smoothing property is proven in particular in Propositions \ref{prop:X_increasing} and \ref{prop:Linfty}. Separately from the proof of the main Theorem, we provide improved versions of the $L^\infty$ bound for the solution under more refined assumptions on the kernel $V$ and/or the initial condition, see Propositions \ref{prop:Linfinity_bound} and \ref{prop:Linfinity_bound2}.
    \item In section \ref{sec:DPA} we prove Theorem \ref{thm:DPA}. The discrete counterpart of the measure-to-$L^\infty$ smoothing effect is contained in Lemma \ref{l:reg_eff}. We also provide a global-in-time $L^\infty$ bound on the discrete density in Proposition \ref{prop:maximum}, the proof of which mimics the main idea behind Proposition \ref{prop:Linfinity_bound}.
\end{itemize}

\section{Preliminaries and statements of the main results}\label{sec:preliminaries}

\subsection{The $p$-Wasserstein space of probability measures in one space dimension}\label{subsec:wasserstein}
For a probability measure $\rho\in \Prob(\R)$, we define the cumulative distribution function
\[F_\rho(x)=\rho((-\infty,x])\]
and the quantile (or pseudo-inverse) function $X_\rho:[0,1]\rightarrow \R$
\[X_\rho(z)=\inf\left\{x\in \R\,:\,\, F_\rho(x)\geq z\right\}\,.\]
Notice that for every $\rho\in \Prob(\R)$, the function $F_\rho:\R\rightarrow [0,1]$ is monotone non-decreasing and right-continuous by definition. Moreover, $X_\rho:[0,1]\rightarrow \R$ is monotone non-decreasing too and therefore has (locally) finite total variation, which implies it has a right-continuous representative. We recall that
\[\rho=(X_\rho)_{\#}\left(\mathcal{L}^1_{[0,1]}\right)\,,\]
where $\mathcal{L}^1_{[0,1]}$ is the one-dimensional Lebesgue measure on $[0,1]$. The above is referred to $\rho$ being the \emph{push-forward} measure of $\mathcal{L}^1_{[0,1]}$ through the map $X_\rho$, which also reads
\begin{equation}\label{eq:change_of_variable}
    \int_\R\varphi(x)d\rho(x) = \int_0^1\varphi(X_\rho(z))dz\,,
\end{equation}
for all $\rho$-integrable, Borel measurable functions $\varphi$, see for example \cite{AGS,villani,santambrogio_book}. 

We denote by $\Prob_p(\R)$ the space of probability measures on $\R$ with finite $p$-moment, with $p\in [1,+\infty]$. Here, the $p$-moment of $\rho\in \Prob(\R)$ is defined as
\[M_p[\rho]=\int|x|^p d\rho(x)\]
for $p$ finite, and
\[M_\infty[\rho]=\mathrm{ess sup}\left\{|x|\,:\,\, \hbox{$x\in \mathrm{supp}(\rho)$}\right\}\,.\]
It is well known that $\Prob_p(\R)$ is a complete metric space equipped with the $p$-Wasserstein distance $\W_p$, the definition of which is omitted (see \cite{villani,AGS}) and provided only in the one-dimensional case in \eqref{eq:isometry} below. Indeed, it is also well-known that the mapping $\Prob_p(\R)\ni \rho \mapsto X_\rho \in L^p([0,1])$ is one-to-one and isometric, more precisely
\begin{equation}\label{eq:isometry}
    \W_p(\rho_1,\rho_2)=\|X_{\rho_1}-X_{\rho_2}\|_{L^p([0,1])}\,.
\end{equation}
Since $X_\rho$ is monotone non-decreasing for all $\rho\in\Prob(\R)$, by defining the closed convex cone in $L^p([0,1])$
\[\K_p=\left\{X\in L^p([0,1])\,:\,\, \hbox{$X$ is non decreasing}\right\}\,,\]
we have that the map $\Prob_p(\R)\ni \rho \mapsto X_\rho \in \K_p$ is an isometric bijection. For future reference, we denote the map 
\begin{equation}\label{eq:T_map}
    T:\Prob_p(\R)\rightarrow \K_p \qquad \hbox{such that $T(\rho)=X_\rho$.}
\end{equation}


\begin{rem}[Pseudo-inverses vs. inverses]\label{rem:considerations}
    \emph{If $F_\rho$ is continuous and strictly increasing as a function from the convex hull of the support of $\rho$, $\mathrm{Co}(\mathrm{supp}(\rho))$, to the interval $[0,1]$, then $X_\rho:[0,1]\rightarrow \mathrm{Co}(\mathrm{supp}(\rho))$ coincides with the inverse of $F$ restricted to $\mathrm{Co}(\mathrm{supp}(\rho))$. Therefore, in this case we have the obvious relations $x=X_\rho(F_\rho(x))$ for all $x\in \mathrm{Co}(\mathrm{supp}(\rho))$ and $z=F_\rho(X_\rho(z))$ for all $z\in [0,1]$. This situation corresponds to the case in which $\rho$ is atom-less and has a connected support. The right-continuity of both $F_\rho$ and $X_\rho$ implies that the relation $z=F_\rho(X_\rho(z))$ is still true for all $z\in [0,1]$ in case $\rho$ is atom-less without restriction on the support of $\rho$. In this case the identity $x\leq X_\rho(F_\rho(x))$ holds. In case $\rho$ has atoms and has connected support, in general we have $z\leq F_\rho(X_\rho(z))$ and $x=X_\rho(F_\rho(x))$.} 
\end{rem}

\subsection{CDF and quantile formulations of the $1$d continuity equation}\label{subsec:continuity}

We now consider the one-dimensional continuity equation
\begin{equation}\label{eq:CE}
       \partial_t \rho + \partial_x (\rho G) = 0 
    \end{equation}
with a given measure initial condition
\begin{equation}\label{eq:CE_initial}
    \rho(\cdot,0)=\rho_0\in \Prob_p(\R)\,,
\end{equation}
for some $p\in [1,+\infty]$. We assume
\begin{align}
    & \hbox{$G\in C(\R\times [0,+\infty))$ and $G$ is Lipschitz continuous in $x$ uniformly in $t$}\,.\label{eq:assumptions_G}
\end{align}
As a consequence of \eqref{eq:assumptions_G}, $G$ is locally bounded. For a fixed $\overline{X}\in \K_p$ right-continuous, we consider the Cauchy problem in $L^p([0,1])$
\begin{equation}\label{eq:cauchy_continuity_1}
    \begin{cases}
        \partial_t X(z,t) = G(X(z,t),t) & \\
        X(z,0)=\overline{X}(z)\,. &
    \end{cases}
\end{equation}
We have the following theorem, the proof of which can be found e.g. in \cite[Theorem 2.1]{DF_Schmidtchen_Iorio} (see also \cite[Theorem 4.4]{santambrogio_book}) in case $G$ is globally bounded. The case of $G$ locally bounded easily follows from \cite[Lemma 8.1.4, Lemma 8.1.6, Proposition 8.1.8]{AGS}.

\begin{thm}\label{thm:continuity}
    Assume $G$ satisfies \eqref{eq:assumptions_G}. Assume that $\rho\in C([0,+\infty);\,\Prob_p(\R))$ is a weak measure solution to \eqref{eq:CE} with $\rho_0$ as initial condition. Then, given $X(z,t)=(T\rho(\cdot,t))(z)$ with $T$ being the isometric map \eqref{eq:T_map}, $X$ satisfies the Cauchy problem \eqref{eq:cauchy_continuity_1} with $\overline{X}=T_{\rho_0}$. Vice versa, assume $X\in L^\infty([0,T]\,;\mathcal{K}_p)$ satisfies \eqref{eq:cauchy_continuity_1} almost everywhere on $(z,t)\in [0,1]\times [0,+\infty)$ for some $\overline{X}\in \mathcal{K}_p$. 
    Let $\rho(x,t)=T^{-1}(X(\cdot,t))(x)$ and assume further that $\rho(\cdot,t)\in L^\infty(\R)$ for all $t>0$. Then $\rho$ is a weak measure solution to the continuity equation \eqref{eq:CE}
    with initial condition $\rho(x,0)=\rho_0=T^{-1}(\overline{X})$. 
\end{thm}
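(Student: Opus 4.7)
The plan is to prove the two implications separately via the isometric bijection $T$ of \eqref{eq:T_map}, translating back and forth between the Eulerian picture for $\rho$ and the Lagrangian picture for $X$.

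\textbf{Eulerian $\Rightarrow$ Lagrangian.} Assume $\rho$ is a weak measure solution. For each $z\in[0,1]$, I first solve the characteristic ODE $\dot Y(z,t)=G(Y(z,t),t)$ with $Y(z,0)=\overline{X}(z)$; by \eqref{eq:assumptions_G} and Cauchy--Lipschitz this admits a unique global solution, and by order preservation of $1$d ODE flows the map $z\mapsto Y(z,t)$ is non-decreasing for every $t$. A Grönwall estimate using the linear growth of $G$ (inherited from its $x$-Lipschitz character in \eqref{eq:assumptions_G}) places $Y(\cdot,t)$ in $L^p([0,1])$, hence in $\K_p$. A direct computation testing $\tilde\rho(\cdot,t):=Y(\cdot,t)_{\#}\mathcal{L}^1_{[0,1]}$ against $\varphi\in C_c^\infty(\R\times[0,T))$, combined with \eqref{eq:change_of_variable} and the ODE for $Y$, then shows that $\tilde\rho$ is itself a weak measure solution of \eqref{eq:CE} with initial datum $\rho_0$. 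Uniqueness of weak measure solutions of \eqref{eq:CE} with an $x$-Lipschitz field, as in \cite[Proposition 8.1.8]{AGS}, forces $\tilde\rho(\cdot,t)=\rho(\cdot,t)$ for every $t$. Since both $Y(\cdot,t)$ and $X(\cdot,t):=T(\rho(\cdot,t))$ are non-decreasing pseudo-inverses of the same measure, their right-continuous representatives coincide and $X$ inherits the ODE from $Y$.

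\textbf{Lagrangian $\Rightarrow$ Eulerian.} Given $X$ satisfying \eqref{eq:cauchy_continuity_1} a.e., let $\varphi\in C_c^\infty(\R\times[0,T))$. Using \eqref{eq:change_of_variable} to move the integral onto $[0,1]$ and substituting $G(X(z,t),t)=\partial_t X(z,t)$, one obtains
\[
\int_0^T\!\!\int_\R(\partial_t\varphi+G\,\partial_x\varphi)\,d\rho(\cdot,t)\,dt = \int_0^1\!\!\int_0^T \tfrac{d}{dt}\bigl[\varphi(X(z,t),t)\bigr]\,dt\,dz = -\int_\R\varphi(x,0)\,d\rho_0(x),
\]
by the fundamental theorem of calculus in $t$ (valid for a.e.\ $z$, since $t\mapsto X(z,t)$ is absolutely continuous thanks to the local boundedness of $G$ on bounded sets and the $L^\infty([0,T];\K_p)$ regularity of $X$) and one more application of \eqref{eq:change_of_variable} at $t=0$. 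This is precisely the distributional formulation of \eqref{eq:CE}--\eqref{eq:CE_initial}.

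\textbf{Main obstacle.} The delicate step is the Eulerian $\Rightarrow$ Lagrangian direction, where I must invoke uniqueness of weak measure solutions of \eqref{eq:CE} in $\Prob_p(\R)$ for a merely locally bounded, $x$-Lipschitz velocity field; this is what forces the reliance on \cite[Lemma 8.1.4, Lemma 8.1.6, Proposition 8.1.8]{AGS} rather than a more elementary argument. A secondary technical point is verifying that $Y(\cdot,t)$, being the image of $\overline{X}$ under the characteristic flow, is genuinely the pseudo-inverse of $\tilde\rho(\cdot,t)$ and not just some transport map; this rests on the monotonicity of the flow of a $1$d ODE with $x$-Lipschitz right-hand side, and on the observation that composition with a continuous non-decreasing map preserves right-continuity and the pseudo-inverse characterisation.
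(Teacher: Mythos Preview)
The paper does not supply its own proof of this theorem; it simply cites \cite[Theorem~2.1]{DF_Schmidtchen_Iorio}, \cite[Theorem~4.4]{santambrogio_book}, and \cite[Lemma~8.1.4, Lemma~8.1.6, Proposition~8.1.8]{AGS}. Your proposal is precisely the standard argument behind those references: for the Eulerian $\Rightarrow$ Lagrangian direction you build the characteristic flow $Y$, push forward $\mathcal{L}^1_{[0,1]}$ to obtain another weak solution, and invoke the uniqueness statement of \cite[Proposition~8.1.8]{AGS} to identify it with $\rho$, then use that a non-decreasing map pushing $\mathcal{L}^1_{[0,1]}$ onto $\rho(\cdot,t)$ must coincide a.e.\ with the quantile function $X(\cdot,t)$. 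For the Lagrangian $\Rightarrow$ Eulerian direction you run the push-forward change of variables \eqref{eq:change_of_variable} and the chain rule along characteristics. Both directions are correct and match the cited literature.

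Two minor remarks. First, you never invoke the extra hypothesis $\rho(\cdot,t)\in L^\infty(\R)$ in the Lagrangian $\Rightarrow$ Eulerian direction, and indeed your computation does not require it; in the paper's only application of that direction (the proof of Theorem~\ref{thm:main}), $X$ is produced by Proposition~\ref{prop:existence1} as a $C^1$ curve in $L^p([0,1])$, so the absolute continuity of $t\mapsto X(z,t)$ you need for the fundamental theorem of calculus is automatic there. The $L^\infty$ clause in the statement seems to be included to dovetail with Definition~\ref{def:dissipative} rather than out of logical necessity. Second, your justification of that absolute continuity (``thanks to the local boundedness of $G$ on bounded sets and the $L^\infty([0,T];\K_p)$ regularity of $X$'') tacitly assumes the a.e.\ equation is meant in the sense that $t\mapsto X(z,t)$ is $W^{1,1}_{\mathrm{loc}}$ for a.e.\ $z$; this is the natural reading and is how the paper uses the theorem, but it is worth saying explicitly since mere a.e.\ differentiability would not suffice.
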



We shall refer to the differential equation in \eqref{eq:cauchy_continuity_1} as the \emph{quantile reformulation} of the continuity equation \eqref{eq:CE}.
We now consider the Cauchy problem
\begin{equation}\label{eq:cauchy_primitive}
\begin{cases}
    \partial_t F(x,t)+ G(x,t)\partial_x F(x,t)=0 & \\
    F(x,0)=\overline{F}(x)\,,
\end{cases}
\end{equation}
where $\overline{F}=F_{\rho_0}$ for some $\rho_0\in \Prob_p(\R)$. Formally, the PDE in \eqref{eq:cauchy_primitive} is obtained by integrating in $x$ the continuity equation \eqref{eq:CE} with $F_x=\rho$.

\begin{thm}\label{thm:primitive}
     Assume $G$ satisfies \eqref{eq:assumptions_G}. Assume there exists a weak solution $F(x,t)$ to \eqref{eq:cauchy_primitive} with $F(\cdot,t)$ non-decreasing, right-continuous in $x$ and such that $\overline{F(\R,t)}=[0,1]$. Then, the distributional derivative $\rho(\cdot,t)=\partial_x F(\cdot,t)$ satisfies the continuity equation \eqref{eq:CE} in a weak measure sense with initial datum $\rho_0$.
\end{thm}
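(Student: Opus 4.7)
The plan is to convert a weak solution $F$ of the transport-type equation \eqref{eq:cauchy_primitive} for the CDF into a weak measure solution of \eqref{eq:CE} for $\rho=\partial_x F$ by testing the $F$-equation against $x$-derivatives of test functions and integrating by parts once in $x$. The argument is essentially dual to the one leading to Theorem \ref{thm:continuity} and requires no new compactness or approximation.

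I would first fix the weak formulation of \eqref{eq:cauchy_primitive}: since $G$ is continuous and $\rho(\cdot,t):=\partial_x F(\cdot,t)$ is a locally finite Radon measure, the natural requirement is that for every $\psi\in C_c^\infty(\R\times[0,T))$,
\begin{equation*}
-\int_0^T\!\!\int_\R F\,\partial_t\psi\,dx\,dt+\int_0^T\!\!\int_\R G(x,t)\,\psi(x,t)\,d\rho(x,t)\,dt-\int_\R \overline{F}(x)\,\psi(x,0)\,dx=0.
\end{equation*}
Given an arbitrary $\varphi\in C_c^\infty(\R\times[0,T))$ to test \eqref{eq:CE}, I would plug $\psi:=\partial_x\varphi$ into the identity above. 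The key algebraic step is the Fubini-based integration by parts
\begin{equation*}
\int_\R F(x,t)\,\partial_x h(x)\,dx=-\int_\R h(x)\,d\rho(x,t),\qquad h\in C_c^\infty(\R),
\end{equation*}
valid because $F(\cdot,t)$ is right-continuous and non-decreasing with $F(-\infty,t)=0$, $F(+\infty,t)=1$ (forced by $\overline{F(\R,t)}=[0,1]$), so the boundary contributions vanish by the compact support of $h$. Applying this identity to $h=\partial_t\varphi(\cdot,t)$ and to $h=\varphi(\cdot,0)$, and using $\overline{F}=F_{\rho_0}$, the weak formulation above becomes
\begin{equation*}
\int_0^T\!\!\int_\R (\partial_t\varphi+G\,\partial_x\varphi)\,d\rho(\cdot,t)\,dt+\int_\R \varphi(x,0)\,d\rho_0(x)=0,
\end{equation*}
which is precisely the weak measure formulation of \eqref{eq:CE} with initial datum $\rho_0$.

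The main technical subtlety I expect is justifying the Stieltjes-type identity in $x$ uniformly in $t$: since $F$ is only of bounded variation in $x$ and weakly regular in $t$, the identity must be established pointwise in $t$ via Fubini on the indicator $\mathbf{1}_{\{y\leq x\}}$, and then integrated in $t$ using that $\partial_t\varphi$ and $G\,\partial_x\varphi$ are bounded with uniform compact $x$-support (so the double integrals are absolutely convergent and the order of integration can be swapped). Apart from this, the assumption $\overline{F(\R,t)}=[0,1]$ is exactly what ensures $\rho(\cdot,t)\in\Prob(\R)$ for every $t$, and the identification $\rho(\cdot,0)=\rho_0$ follows from the injectivity of the CDF map $\rho\mapsto F_\rho$ recalled in Subsection \ref{subsec:wasserstein}.
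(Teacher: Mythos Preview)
Your proposal is correct and follows essentially the same route as the paper: test the weak formulation of \eqref{eq:cauchy_primitive} with $\psi=\partial_x\varphi$ and integrate by parts in $x$ to convert $F\,dx$-integrals into $d\rho$-integrals. The only minor difference is that the paper writes the weak form of the $F$-equation entirely against $F$ (as $\int F(\psi_t+(G\psi)_x)=0$) and uses test functions supported in $(0,+\infty)$, so your explicit handling of the initial condition is in fact more complete than the paper's own proof.
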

For the proof of this Theorem, see the Appendix \ref{app} at the end of this manuscript.
We shall refer to the equation in \eqref{eq:cauchy_primitive} as the \emph{CDF reformulation} of the continuity equation \eqref{eq:CE}.

\subsection{The nonlocal velocity field in the CDF reformulation: a non-uniqueness issue}\label{subsec:non_uniqueness}
Let us now analyse more closely the nonlocal equation \eqref{eq:main}. Our goal here is to provide its CDF reformulation. To do so, we have to analyse the velocity field
\begin{equation}\label{eq:G}
  G(x,t)=v(V\ast\rho(x,t))\,.  
\end{equation}
Assuming (V1)-(V2)-(V3) above, by setting
\begin{align*}
& U(x)=\begin{cases}
V(x) & x\leq 0\\
\lambda & x >0
\end{cases}\\
& H(x)=\begin{cases}
0 & x\leq 0\\
1 & x >0
\end{cases}
\end{align*}
we obtain
\begin{equation}\label{eq:V_dec}
    V(x)=U(x)-\lambda H(x)\,.
\end{equation}
Notice that $U$ is a Lipschitz continuous function on $\R$ with $[U]_{\mathrm{Lip}}=[V]_{\mathrm{Lip}((-\infty,0])}$.

In order to write the nonlocal velocity field $G$ in the quantile and CDF reformulations, we assume for simplicity $\rho\in L^\infty(\R\times (0,+\infty))$. We compute
\begin{align*}
 G(x,t)& \ =v\left(\int_{\R}U(x-y)\rho(y,t)dy - \lambda \int_{\R} H(x-y)\rho(y,t)dy\right) \\
& \ =  v\left(\int_{\R}U(x-y)\rho(y,t) dy- \lambda \int_{-\infty}^x \rho(y,t)dy\right) \\
& \ = v\left(\int_{\R}U(x-y)d\rho(y,t) - \lambda F_\rho(x,t)\right) \,.
\end{align*}
The formal assumption $\rho\in L^\infty(\R\times (0,+\infty))$ and our basic assumptions on $v$ and $U$ imply the above $G$ is Lipschitz continuous. Hence (at least formally), with the notation $F=F_\rho$, the CDF reformulation of \eqref{eq:main} is
\begin{equation}\label{eq:CDF}
    \partial_t F(x,t) +  v\left(\int_{\R}U(x-y)d F_x (y,t) - \lambda F(x,t)\right) \partial_x F(x,t)= 0\,.
\end{equation}
The reference space for $F$ is the set of right-continuous, non-decreasing functions $F:\R\rightarrow [0,1]$. 
Let us now consider the prototype example 
\[v(\rho)=1-\rho\,,\qquad V(x)=\mathbf{1}_{(-\infty,0]}(x)\,.\]
In that case, we have $U\equiv 1$ and $\lambda=1$, which implies
\[
v\left(\int_{\R}U(x-y)d F_x (y,t) - \lambda F(x,t)\right) = 1-\left(\int_{\R}d F_x(y,t) - F(x,t)\right) = F(x,t)\,.
\]
Consequently, $F$ formally satisfies the Burgers equation
\begin{equation}\label{eq:burgers}
    \partial_t F + \partial_x (F^2/2) = 0\,.
\end{equation}
In order to understand the behavior of \eqref{eq:main} in this particular case with an initial condition given by a singular measure, let us fix $\rho_0=\delta_0$ as a Dirac delta measure initial condition for \eqref{eq:main}. The corresponding initial condition for the CDF formulation \eqref{eq:burgers} is 
\[F_0(x)=\begin{cases}
0 & \hbox{if $x<0$}\\
1 & \hbox{if $x\geq0$}\,.
\end{cases}\]
It is well known that two possible weak solutions to \eqref{eq:burgers} arise in this case, namely the \emph{rarefaction wave}
\[F_1(x,t)=
\begin{cases}
    0 & \hbox{if $x<0$}\\
    x/t & \hbox{if $0\leq x< t$}\\
    1 & \hbox{if $x\geq t$}
\end{cases}
\]
and the \emph{shock wave}
\[
F_2(x,t)=\begin{cases}
    0 & \hbox{if $x<t/2$}\\
    1 & \hbox{if $x\geq t/2$}\,.
\end{cases}
\]
The former, $F_1$, corresponds in the $\rho$ variable to a \emph{diffused bump}
\[\rho_1(x,t)=\frac{1}{t}\mathbf{1}_{[0,t]}(x)\,,\]
whereas the latter, $F_2$, corresponds in $\rho$ landscape to a \emph{moving particle}
\[\rho_2(\cdot,t)=\delta_{t/2}\,.\]
It is also well known that the concept of \emph{entropy solution} formulated by Oleinik \cite{oleinik} and Kruzkov \cite{kruzkov} allows to single out a unique solution, which in this particular case is $F_1$.

\subsection{The quantile reformulation of \eqref{eq:main}}\label{subsec:new}
We shall use the above example as a guiding paradigm to derive our concept of solutions and to obtain the quantile reformulation of \eqref{eq:main}. The example in subsection \ref{subsec:non_uniqueness} shows that, in view of the decreasing discontinuity of $V$ at the origin, we may have multiple measure solutions to \eqref{eq:main} with a fixed initial datum in the space of probability measures. The example suggests that the "correct" solution is the one in which the initially concentrated mass instantaneously regularises to become $L^\infty$. A possible way to formulate a suitable theory could be to work directly in the CDF-landscape also for more general $v$ and $U$, thus using a concept of entropy solution for \eqref{eq:CDF} (which, however, features some additional nonlocality in the general case). However, we shall not perform this task in this paper. Instead, we shall rely on the quantile reformulation to provide a proper concept of solution. Such a concept of solution should encompass the smoothing effect featured by the $\rho_1$ solution in the above example. 

Let us (formally) recover the quantile reformulation for $X_\rho=X(z,t)$, $z\in [0,1]$, $t\geq 0$. We need to write $G(X(z,t),t)$ with $G$ given by \eqref{eq:G}. More precisely, we need to compute
\begin{align*}
& v\left(\int_\R V(x-y)d\rho(y,t)\right) = v\left(\int_{x}^{+\infty}V(x-y)d\rho(y,t)\right)\\
& \ = v\left(\int_{\R}U(x-y)d\rho(y,t) - \lambda \int_{-\infty}^x d\rho(y,t)\right)\\
    & \ = v\left(\int_\R U(x-y)d\rho(y,t)-\lambda F(x,t)\right)\qquad \hbox{on $x=X(z,t)$}\,.
\end{align*}
Due to \eqref{eq:change_of_variable}, we can write the convolution term as
\[\int_\R U(x-y)d\rho(y,t) = \int_0^1 U(X(z,t)-X(\zeta,t))d\zeta\,.\]
Now, in order to express the term $F(X(z,t),t)$, guided by the previous example we deliberately choose that $\rho=F_x$ is bounded (and hence atom-less), with the idea that this is what we expect to happen for positive times due to the instantaneous smoothing effect. From Remark \ref{rem:considerations}, we then have $F(X(z,t),t)=z$. This leads to the following quantile reformulation for \eqref{eq:main}:
\begin{equation}\label{eq:pseudo1}
     \partial_t X(z,t)= v\left(\int_0^1 U(X(z,t)-X(\zeta,t)) d\zeta -\lambda z\right)
\end{equation}
coupled with the initial condition
\begin{equation}\label{eq:pseudo1_initial}
    X(0,t)=\overline{X}(z)\,.
\end{equation}

We stress the importance of having used $\rho\in L^\infty$ to obtain \eqref{eq:pseudo1}, in particular to detect the identity $F(X(z,t),t)=z$. Consider the solution $F_2$ of the previous example in the CDF framework. In that case $\rho_2=(F_2)_x$ is a moving Dirac delta, and the $\rho\in L^\infty$ assumption is far from being satisfied. Indeed, in that case the identity $F_2(X_{\rho_2}(z,t),t)=z$ is false and one cannot recover the quantile reformulation \eqref{eq:pseudo1}. 

\subsection{Dissipative measure solutions: statement of the main result}\label{subsec:dissipative}

Based on the above considerations, we are now ready to provide our concept of solution to \eqref{eq:main}.

\begin{defn}[Dissipative measure solutions]\label{def:dissipative}
    Let $\rho_0\in \Prob_p(\R)$ for some $p\in [1,+\infty]$. A function $\rho\in C([0,+\infty);\Prob_p(\R))$ is a \emph{dissipative measure solution} to \eqref{eq:main} if
    \begin{itemize}
        \item [(i)] $\rho(\cdot,t)\in L^\infty(\R)$ for all $t>0$.
        \item [(ii)] $\rho$ satisfies \eqref{eq:main} in a distributional sense on $(x,t)\in \R\times (0,+\infty)$, that is,
\[
\int_{0}^T\int_{\R}\rho(x,t)\partial_t\varphi(x,t)\,dx\,dt+\int_{0}^T\int_{\R}\rho(x,t)v(V\ast\rho)(x,t)\partial_x\varphi(x,t)\,dx\,dt=0\,,
\]
for all $\varphi\in C^1_c(\R\times (0,+\infty))$.
        \item [(iii)] $\rho(\cdot,0)=\rho_0$.
    \end{itemize}
\end{defn}

We may now state our existence and uniqueness result.

\begin{thm}[Existence and uniqueness of dissipative measure solutions]\label{thm:main}
    Let $\rho_0\in \Prob_p(\R)$ for some $p\in [1,+\infty]$. Then, there exists one and only one dissipative measure solution $\rho$ to \eqref{eq:main} in the sense of Definition \ref{def:dissipative}. Moreover, $\rho$ satisfies the following additional properties.
    \begin{itemize}
        \item [(i)]
       Given $X(\cdot,t)=T_{\rho(\cdot,t)}$ with the isometric map $T$ defined in \eqref{eq:T_map}, $X(\cdot,0)=T(\rho_0)$ and the function $X$ satisfies
     \begin{equation}\label{eq:pseudo_main}
      \partial_t X(z,t)= v\left(\int_0^1 U(X(z,t)-X(\zeta,t)) d\zeta -\lambda z\right)
 \end{equation}
 almost everywhere on $(z,t)\in [0,1]\times (0,+\infty)$.
        \item [(ii)] $\rho\in L^\infty_{\mathrm{loc}}((0,+\infty);L^\infty(\R))$ and the following inequality holds
        \begin{equation}\label{eq:smoothing}
            \|\rho(\cdot,t)\|_{L^\infty(\R)}\leq \frac{[v]_{\mathrm{Lip}}[U]_{\mathrm{Lip}}}{b\lambda}\left(1-e^{-[v]_{\mathrm{Lip}}[U]_{\mathrm{Lip}}t}\right)^{-1}
        \end{equation}
        for all $t>0$.
    \end{itemize}
Moreover, given two dissipative measures solutions $\rho_1$ and $\rho_2$ with initial data $\rho_{1,0}$ and $\rho_{2,0}$ respectively, $\rho_{0,1},\rho_{0,2}\in \Prob_p(\R)$ for some $p\in [1,+\infty]$, the following stability estimate holds:
\begin{equation}\label{eq:stability}
    \mathcal{W}_p(\rho_{1}(\cdot,t),\rho_2(\cdot,t))\leq e^{Ct}\mathcal{W}_p(\rho_{0,1},\rho_{0,2})
\end{equation}
with $C$ given by \eqref{eq:lipschitz_constant} below.
\end{thm}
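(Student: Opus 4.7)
The plan is to transfer the problem onto the quantile side via the isometry $T:\Prob_p(\R)\to\K_p$ from \eqref{eq:T_map}, prove well-posedness for the quantile ODE \eqref{eq:pseudo_main} in $L^p([0,1])$, establish the smoothing effect at the quantile level as a positive lower bound on the $z$-slope of $X(\cdot,t)$, and then transfer back to the measure setting through Theorem \ref{thm:continuity}. The decisive algebraic step is the decomposition \eqref{eq:V_dec}: replacing the discontinuous $V$ by the globally Lipschitz $U$ plus the linear correction $-\lambda z$ in the quantile variable absorbs all the difficulty of the jump into a linear term, bringing the problem within reach of standard Banach-space ODE tools.

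First I would verify the Cauchy--Lipschitz hypothesis on $L^p([0,1])$. Setting
\[\mathcal{F}[X](z):=v\left(\int_0^1 U(X(z)-X(\zeta))\,d\zeta-\lambda z\right),\]
a direct computation using (V3) and (v1) yields
\[\|\mathcal{F}[X_1]-\mathcal{F}[X_2]\|_{L^p}\le [v]_{\mathrm{Lip}}[U]_{\mathrm{Lip}}\big(\|X_1-X_2\|_{L^p}+\|X_1-X_2\|_{L^1}\big)\le 2[v]_{\mathrm{Lip}}[U]_{\mathrm{Lip}}\|X_1-X_2\|_{L^p},\]
which places us within the Cauchy--Lipschitz theorem in the Banach space $L^p([0,1])$ and produces a unique global solution $X\in C([0,+\infty);L^p([0,1]))$ starting from $\overline{X}=T(\rho_0)$. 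A short Picard iteration argument then shows that the closed convex cone $\K_p$ is invariant for the flow, so $X(\cdot,t)\in\K_p$ for all $t$.

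The heart of the proof is the smoothing bound \eqref{eq:smoothing}. Fix $0\le z_1<z_2\le 1$ and set $Y(t):=X(z_2,t)-X(z_1,t)\ge 0$ together with $A(z,t):=\int_0^1 U(X(z,t)-X(\zeta,t))d\zeta-\lambda z$. The Lipschitz property of $U$ gives
\[A(z_2,t)-A(z_1,t)\le [U]_{\mathrm{Lip}}Y(t)-\lambda(z_2-z_1),\]
the negative $-\lambda(z_2-z_1)$ term encoding the regularizing effect of the jump of $V$ at the origin. Combining this with (v2) --- which supplies the strict-decrease inequality $v(a_2)-v(a_1)\le -b(a_2-a_1)$ for non-negative arguments --- I expect to derive a differential inequality of the form
\[Y'(t)\ge c_1\lambda(z_2-z_1)-c_2Y(t),\]
with explicit positive constants $c_1,c_2$ depending on $b$, $[v]_{\mathrm{Lip}}$, and $[U]_{\mathrm{Lip}}$. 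A Grönwall integration then gives $Y(t)\ge c(t)(z_2-z_1)$ with $c(t)>0$ on $t>0$, and passing to the infimum over discrete slopes delivers \eqref{eq:smoothing}. The main obstacle here is the sign restriction in (v2): since $A$ need not be non-negative in general, one must carefully combine the (v2) inequality with the plain Lipschitz bound $[v]_{\mathrm{Lip}}$ according to the sign configuration of $A(z_1,t)$ and $A(z_2,t)$, which is presumably where the ratio $[v]_{\mathrm{Lip}}/b$ appearing in the constants of \eqref{eq:smoothing} comes from.

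Once the smoothing estimate is in hand, $\rho(\cdot,t)=T^{-1}(X(\cdot,t))\in L^\infty(\R)$ for $t>0$ and the velocity field $G(x,t)=v((V*\rho(\cdot,t))(x))$ satisfies \eqref{eq:assumptions_G}. The second half of Theorem \ref{thm:continuity} then converts the quantile ODE into a weak measure solution of \eqref{eq:main}, proving existence in the sense of Definition \ref{def:dissipative}. Conversely, any dissipative solution $\rho$ yields, via the first half of Theorem \ref{thm:continuity} and the $L^\infty$ property built into Definition \ref{def:dissipative}(i), a solution $X=T(\rho)$ of the quantile ODE, so uniqueness at the measure level reduces to $L^p$-uniqueness at the quantile level. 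Finally, the Wasserstein stability \eqref{eq:stability} follows from a direct Grönwall estimate on $\|X_1(\cdot,t)-X_2(\cdot,t)\|_{L^p([0,1])}$ using the same Lipschitz constant for $\mathcal{F}$ together with the isometry \eqref{eq:isometry}.
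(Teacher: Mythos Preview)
Your plan coincides with the paper's proof: Lipschitz continuity of the quantile operator on $L^p([0,1])$ (Proposition~\ref{prop:operator}), Cauchy--Lipschitz--Picard for global existence and uniqueness (Proposition~\ref{prop:existence1}), a differential inequality for $Y(t)=X(z_2,t)-X(z_1,t)$ yielding the slope bound and hence \eqref{eq:smoothing} (Propositions~\ref{prop:X_increasing} and \ref{prop:Linfty}), and the transfer to \eqref{eq:main} via Theorem~\ref{thm:continuity}. One step you leave implicit but should state is the identity $\mathcal{F}[X(\cdot,t)](z)=v(V\ast\rho)(X(z,t),t)$, which requires $\rho(\cdot,t)\in L^\infty$ so that $F_\rho(X(z,t),t)=z$; this is Proposition~\ref{prop:change_velocity} and is what makes Theorem~\ref{thm:continuity} applicable in both directions.

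Two small corrections to your outline. First, the paper does not prove $\K_p$-invariance by a separate Picard argument; $\mathcal{F}[X]$ is not obviously monotone in $z$ even when $X$ is, so the iterates do not stay in $\K_p$ for free. Instead the paper derives
\[
Y'(t)\ge b\lambda(z_2-z_1)-[v]_{\mathrm{Lip}}[U]_{\mathrm{Lip}}\,|Y(t)|
\]
with the absolute value, shows $Y\ge 0$ by contradiction at a hypothetical first crossing time (where the right-hand side is strictly positive), and only then replaces $|Y|$ by $Y$ to integrate via Gr\"onwall. Second, the constants in \eqref{eq:smoothing} do not come from sign casework on the arguments of $v$: the paper simply adds and subtracts $v\bigl(\int_0^1 U(X(z_1,t)-X(\zeta,t))\,d\zeta-\lambda z_2\bigr)$, uses (v1) on the piece where the $U$-integrals differ and (v2) on the piece where only the $-\lambda z$ terms differ, and this directly produces the coefficients $b\lambda$ and $[v]_{\mathrm{Lip}}[U]_{\mathrm{Lip}}$. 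Your concern that (v2) is stated only for nonnegative arguments is legitimate, but the paper applies it without checking this and does not perform the case analysis you anticipate.
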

As we shall see in the proof of Theorem \ref{thm:main} carried out in Section \ref{sec:proof_main}, a major point in it will be the existence and uniqueness of solutions for \eqref{eq:pseudo_main} in $L^p([0,1])$.

\begin{rem}
\emph{
While introducing our concept of solution in Definition \ref{def:dissipative}, the natural question arises on why should we use the concept of entropy solution in the $F$-landscape as a criterion to derive a proper concept of solution to \eqref{eq:main}. The answer to this question is that, as is well known in the context of local conservation laws \eqref{eq:local}, entropy solutions are \emph{stable}. Indeed, such a property somehow reflects in the stability property \eqref{eq:stability} stated in Theorem \ref{thm:main}. We observe in particular that the case $p=1$ in \eqref{eq:stability} implies stability in $L^1$ in the $F$-landscape.}
\end{rem}

\begin{rem}[Assumption (v2)]\label{rem:assumptions}
\emph{
It is worth remarking at this stage that 
assumption (v2) would not be satisfied by functions $v$ being supported on an interval $[0,\rho_{\max}]$ and strictly monotone decreasing therein. However, we will show in Proposition \ref{prop:Linfinity_bound} below that if $V$ is nonnegative and non-decreasing on $(-\infty,0]$ then an $L^\infty$ initial condition $0\leq \rho_0\leq \rho_{\max}$ yields a solution $\rho(\cdot,t)\in [0,\rho_{\max}]$. Hence, the strict monotonicity on $v$ in (v2) only needs to be required on the interval $[0,\rho_{\max}]$ in this case.
}
\end{rem}

\subsection{Deterministic particle approximation: a convergence result}\label{subsec:DPA}
The second task of this paper is to provide a deterministic particle approximation (or Follow-the-Leader approximation) to the solution provided in Theorem \ref{thm:main}, following the approach of \cite{DF_rosini,DF_fagioli_rosini,DF_fagioli_rosini_russo,DF_fagioli_radici,radici_stra}, see also \cite{holden1,holden2}. Roughly speaking, for a given $\rho_0\in \Prob(\R)$, 
we obtain the unique dissipative measure solution $\rho$ with initial condition $\rho_0$ provided in Theorem \ref{thm:main} as the limit (in a suitable measure topology) of a measure $\rho^N$ constructed out of a set of moving particles $x_0(t),\ldots,x_N(t)$ obeying a suitable system of ODEs.

Consider an initial condition $\rho_0$ be in $\Prob(\R)$. For simplicity, we shall assume that $\rho_0$ is compactly supported. Considering the case of non compactly supported initial measures brings unnecessary technical difficulties to the problem, for which we refer to \cite{DF_fagioli_rosini}, in which the case $\rho_0\in \Prob_p(\R)$ is considered for local conservation laws. For a fixed $N\in \mathbb{N}$, we define the initial particles distribution as follows. We set $m_N=1/N$ and define $\overline{X}=T(\rho_0)=X_{\rho_0}$ according to \eqref{eq:T_map}. We now set
\begin{equation}\label{eq:particles_initial}
    \overline{x}_i=\overline{X}(i\, m_N)\,,\qquad i=0,\ldots,N\,.
\end{equation}
Note that in case $\rho_0$ has no atomic part, then $\overline{X}$ is strictly increasing on $[0,1]$ and
\begin{align*}
    & \int_{\overline{x}_i}^{\overline{x}_{i+1}}\rho_0 (y)=\int_{\R}\mathbf{1}_{[\overline{x}_i,\overline{x}_{i+1}]}(y) d\rho_0(y)=\int_0^1 \mathbf{1}_{[\overline{x}_i,\overline{x}_{i+1}]}(\overline{X}(z))dz \\
    & \ = \mathrm{meas}\left(\left\{z\in [0,1]\,:\,\, \overline{x}_i\leq \overline{X}(z)\leq \overline{x}_{i+1}\right\}\right) = (i+1)m_N-i m_N=m_N\,. 
\end{align*}
The latter is the way this scheme is usually introduced. Our definition \eqref{eq:particles_initial} is more general and allows to consider measures $\rho_0$ with atomic parts. Clearly, for a measure $\rho_0$, two consecutive particles $\overline{x}_i$ and $\overline{x}_{i+1}$ may overlap.

We now define the way particles evolve in time. The key to define the velocity $\dot{x}_i(t)$ of each particles is to find a suitable discrete version $W[\rho^N](x)$ of the velocity field $W[\rho]$ in \eqref{eq:Wh} and set
\[\dot{x}_i(t)=W[\rho^N](x_i(t)\,.\]
The main guiding principle to construct $W[\rho^N]$ (quite often in this setting) is that of replacing $\rho$ in \eqref{eq:Wh} by the empirical measure
\[\mu^N(t)=m_N\sum_{i=1}^{N}\delta_{x_i(t)}\,,\]
which would return the velocity field
\[W[\rho^N](x)=v\left(m_N \sum_{k=1}^N V(x-x_k(t))\right)\,.\]
The possible discontinuity of $V$ at zero of $W[\rho^N]$ is a possible difficulty for this problem. Even more challenging is the fact that more than one particles may occupy the same position initially. To sort both issues, somehow inspired by the procedure that led to \eqref{eq:pseudo1}, recalling that $\lambda=V(0^-)$, we define our particle system as follows
\begin{equation}\label{eq:DPA_V}
\begin{cases}
  \displaystyle{\dot{x}_i(t) = v\left( m_N \sum_{k=i+1}^N V(x_i-x_j)\right)}\\
    x_i(0) = \overline{x}_i,
\end{cases}
\end{equation}
for all $i=0,\ldots,N$. We observe that the ODEs in \eqref{eq:DPA_V} are equivalent to
\[
\dot{x}_i(t) = v\left( m_N \sum_{x_k>x_i} V(x_i-x_j)+m_N\lambda \left|\left\{k\in\{1,\ldots,N\}\,:\,\, \hbox{$x_k(t)=x_i(t)$ and $k>i$}\right\}\right|\right)\,.
\]
Roughly speaking, we impose that the movement of every particle $x_i$ is only affected by particles with position strictly larger than $x_i$ and by those with same position indexed by $k>i$. Such a mechanism is meant to label particles occupying the same initial position in such a way to provide them with an order. We will now rewrite \eqref{eq:DPA_V} in such a way to ensure a suitable existence and uniqueness theory for it. Provided that $x_i(t)\leq x_{i+1}(t)$ for all $i=0,\ldots,N-1$ and for all $t\geq 0$, recalling the definition of $U$, we compute
\begin{align}
& m_N \sum_{k=i+1}^N V(x_i-x_k)  = m_N \sum_{i=0}^N U(x_i-x_k) - m_N\sum_{i=0}^i \lambda\nonumber\\
& \ = m_N \sum_{i=0}^N U(x_i-x_k) - m_N(i+1)\lambda\,.\label{eq:equivalence}
\end{align}
Hence, \eqref{eq:DPA_V} can be rewritten as
\begin{equation}\label{eq:DPA_U}
\begin{cases}
    \displaystyle{\dot{x}_i(t) = v\left( m_N \sum_{k=0}^N U(x_i(t)-x_k(t)) -m_N \lambda\,(i+1)\right)}\\
    x_i(0) = \overline{x}_i
\end{cases}\qquad i=0,\ldots,N\,.
\end{equation}
Since $v$ and $U$ are globally Lipschitz continuous, \eqref{eq:DPA_U} has a unique global-in-time solution. We will prove that such a solution solves \eqref{eq:DPA_V} as well. Our particle approximation result is stated in the next Theorem.

\begin{thm}\label{thm:DPA}
    Let $\rho_0\in \Prob(\R)$ have compact support and let $T\geq 0$. For fixed $N\in \N$ define $\overline{x}_1,\ldots,\overline{x}_N$ as in \eqref{eq:particles_initial} and let $x_1(t),\ldots,x_N(t)$ be the unique global solution to \eqref{eq:DPA_U}. Then, $x_i(t)<x_{i+1}(t)$ for all $t>0$ and the particles $x_i(t)$, $i=1,\ldots,N$, solve \eqref{eq:DPA_V} too. Moreover, given
    \begin{equation}\label{eq:reconstruction}
        \rho^N(x,t)=\sum_{i=0}^N \rho_i(t)\mathbf{1}_{I_i(t)}\,,\quad \rho_i(t)=\frac{m_N}{|I_i(t)|}\,,\quad I_i(t)=x_{i+1}(t)-x_i(t)\,,
    \end{equation}
    we have that $\rho^N(t)$ is uniformly bounded in $L^\infty(\R)$ with respect to $N$ for all times $t>0$ and $\rho^N\rightarrow \rho$ in the weak star $L^\infty(\R\times [\delta,+\infty))$ topology for all $\delta>0$, with $\rho$ being the unique dissipative measure solution to \eqref{eq:main} in the sense of Definition \ref{def:dissipative} with initial datum $\rho_0$.
\end{thm}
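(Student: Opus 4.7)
The plan is to establish the four claims of Theorem \ref{thm:DPA} in order, using the quantile viewpoint of Theorem \ref{thm:main} and the globally Lipschitz formulation \eqref{eq:DPA_U}. Global existence and uniqueness for \eqref{eq:DPA_U} is immediate. To obtain $x_i(t) < x_{i+1}(t)$ for $t>0$, I argue at a hypothetical first contact time $t_\star$: if $x_i(t_\star) = x_{i+1}(t_\star)$, then the convolutions $m_N \sum_k U(x_i-x_k)$ and $m_N \sum_k U(x_{i+1}-x_k)$ coincide at $t_\star$, so the arguments $A_i, A_{i+1}$ of $v$ in \eqref{eq:DPA_U} satisfy $A_{i+1} - A_i = -m_N \lambda < 0$. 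Assumption (v2) then yields $\dot x_{i+1}(t_\star) - \dot x_i(t_\star) \ge b m_N \lambda > 0$, which excludes collisions and forces initially coincident particles to separate instantaneously; thus strict ordering holds for $t>0$, and identity \eqref{eq:equivalence} gives the equivalence of \eqref{eq:DPA_U} and \eqref{eq:DPA_V}.

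Next, setting $I_i(t) := x_{i+1}(t) - x_i(t)$ so that $\rho_i(t) = m_N/I_i(t)$, one has $\dot I_i = v(A_{i+1}) - v(A_i)$. The Lipschitz bound on $U$ combined with the ordering yields $|A_{i+1} - A_i + m_N\lambda| \le [U]_{\mathrm{Lip}} I_i$, and assumption (v2) produces, in the regime $A_{i+1} \le A_i$, a differential inequality of the form $\dot I_i \ge b m_N \lambda - b [U]_{\mathrm{Lip}} I_i$, with a matching estimate in the opposite regime. Integrating, $\rho_i(t) = m_N/I_i(t)$ is uniformly bounded by a constant of the form \eqref{eq:smoothing}, independently of $i$ and $N$; this is Proposition \ref{prop:maximum} and Lemma \ref{l:reg_eff}, and constitutes the main technical obstacle: the estimate must hold with no assumption on how many particles initially coincide, and is the discrete counterpart of the measure-to-$L^\infty$ smoothing property of Theorem \ref{thm:main}.

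Turning to convergence, the bounded range of $A_i$ (from compact support of $\rho_0$ and the uniform $L^\infty$ bound on $\rho^N$) gives $|\dot x_i| \le C$ uniformly in $N$. Define $X^N(z,t) := x_i(t)$ for $z \in [i m_N, (i+1) m_N)$: each $X^N(\cdot,t)$ is non-decreasing and uniformly bounded, so Helly's theorem combined with Arzel\`a--Ascoli in $t$ extracts a subsequence converging pointwise a.e. on $[0,1] \times (0,+\infty)$ to some $X \in \K_p$, and convergence in $L^p([0,1])$ for each $t>0$ follows by dominated convergence. The identity
\[ m_N \sum_{k=0}^N U(X^N(z,t) - x_k(t)) = \int_0^1 U(X^N(z,t) - X^N(\zeta,t))\, d\zeta, \]
the Lipschitz continuity of $U$ and $v$, and the uniform convergence $m_N(i+1) \to z$ on $[i m_N, (i+1) m_N)$ allow passing to the limit in the ODE \eqref{eq:DPA_U} to show that $X$ solves \eqref{eq:pseudo_main}. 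Convergence of the initial data $X^N(\cdot,0) \to T(\rho_0)$ in $L^p([0,1])$ follows from \eqref{eq:particles_initial} and the isometry \eqref{eq:isometry}.

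The uniqueness part of Theorem \ref{thm:main} then identifies $X$ with the quantile of the unique dissipative measure solution $\rho$ issued from $\rho_0$, so the whole sequence converges. Since $\rho^N(\cdot,t)$ is obtained from $X^N(\cdot,t)$ by the piecewise constant reconstruction \eqref{eq:reconstruction}, the uniform $L^\infty$ bound upgrades the pointwise convergence to weak-$\ast$ convergence of $\rho^N$ to $\rho$ in $L^\infty(\R \times [\delta, +\infty))$ for every $\delta > 0$, as claimed.
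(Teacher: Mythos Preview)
Your proposal is largely correct, and the first two parts (strict ordering via a first-contact-time argument, and the discrete smoothing estimate giving a uniform $L^\infty$ bound on $\rho^N$) match the paper's Lemma~\ref{l:reg_eff} closely. One remark: the paper obtains the differential inequality by splitting $v(A_{i+1})-v(A_i)$ through the intermediate point $m_N\sum_k U(x_{i+1}-x_k)-m_N\lambda(i+1)$, so that (v2) applies cleanly to one piece and (v1) to the other, rather than by a case distinction on the sign of $A_{i+1}-A_i$; your ``matching estimate in the opposite regime'' is vague and would need exactly this splitting to be made rigorous.

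The convergence step is where you genuinely diverge from the paper. The paper does \emph{not} use the quantile ODE in Section~\ref{sec:DPA}: it works directly with $\rho^N$, extracts weak-$\ast$ compactness in $L^\infty(\R\times[a,T])$ from the uniform bound, and then shows in two separate computations that (i) the discrete weak formulation \eqref{eq:weak_sol_disc} converges to the continuous one \eqref{eq:weak_sol}, and (ii) $\rho^N$ is an approximate weak solution, i.e.\ the residual in \eqref{eq:weak_sol_disc} vanishes as $N\to\infty$. This is a classical consistency-plus-compactness argument for the PDE, and Step~3 involves a fairly lengthy integration-by-parts computation exploiting \eqref{eq:Lipschitz_particles} and Lemma~\ref{l:spt_max}. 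Your route---compactness of the piecewise-constant quantiles $X^N$ via Helly and Arzel\`a--Ascoli, passing to the limit in the ODE \eqref{eq:DPA_U} to obtain \eqref{eq:pseudo_main}, and invoking the uniqueness in Theorem~\ref{thm:main}---is shorter and more in keeping with the quantile machinery of Section~\ref{sec:proof_main}; it buys you the avoidance of the consistency computation entirely. One caveat you should address: your $X^N$ is the quantile of the empirical measure $m_N\sum_i\delta_{x_i}$, not of the piecewise-constant $\rho^N$ in \eqref{eq:reconstruction}, whose quantile is the piecewise-\emph{affine} interpolant of the $x_i$. The two differ in $L^1([0,1])$ by at most $m_N(x_N-x_0)$, which vanishes by Lemma~\ref{l:spt_max}, so the gap is harmless, but it must be stated before you can deduce $\rho^N\to\rho$ in weak-$\ast$ $L^\infty$ from the convergence of $X^N$.
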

We will prove Theorem \ref{thm:DPA} in Section \ref{sec:DPA}.

\begin{rem}
    \emph{
    The above Theorem \ref{thm:DPA} may apparently seem as an obvious consequence of the stability result stated in Theorem \ref{thm:main}, because clearly the particle approximation $\rho^N(\cdot,0)$ converges to the initial condition $\rho_0$ as $N\rightarrow+\infty$ in the $p$-Wasserstein distance for all $p\in [1,+\infty]$. However, the discrete measure $\rho^N(\cdot,t)$ is \emph{not} an exact solution to \eqref{eq:main} for positive times. Neither is its empirical measure version \eqref{eq:empirical}. The latter would solve \eqref{eq:main} in case $V$ is smooth on the whole $\R$. This is not the case here, due to the discontinuity of $V$ which forces initial particles to \emph{not remain particles} due to the measure-to-$L^\infty$ smoothing effect.
    }
\end{rem}

\section{Existence and uniqueness of dissipative measure solutions}\label{sec:proof_main}

This section is mostly devoted to providing the proof of Theorem \ref{thm:main} and prove some additional results. Let $p\in [1,+\infty]$. We define the operator $\mathcal{A}:L^p([0,1])\rightarrow L^p([0,1])$ as follows. Let $X\in L^p([0,1])$. Then, 
\begin{align*}
    & \mathcal{A}[X](z)=v(W_X(z))
    & \hbox{with}\qquad W_X(z)=\int_0^1 U(X(z)-X(\zeta))d\zeta -\lambda z\,.
\end{align*}
We observe that the operator $\mathcal{A}$ is well-posed from $L^p([0,1])$ onto itself in view of assumptions (v2) and (V3).

\begin{prop}\label{prop:operator}
The operator $\mathcal{A}$ is Lipschitz continuous on $L^p([0,1])$ with Lipschitz constant
\begin{equation}\label{eq:lipschitz_constant}
 C=\begin{cases}
     2^{\frac{p+1}{p}}[v]_{\mathrm{Lip}}[V]_{\mathrm{Lip}((-\infty,0])} & \hbox{if $p<+\infty$}\\
     2 [v]_{\mathrm{Lip}}[V]_{\mathrm{Lip}((-\infty,0])} & \hbox{if $p=+\infty$}\,.
 \end{cases}   
\end{equation}
\end{prop}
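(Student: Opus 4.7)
The plan is to obtain a pointwise bound on $|\mathcal{A}[X_1](z) - \mathcal{A}[X_2](z)|$ by chaining the Lipschitz bounds on $v$ and $U$, and then pass to $L^p$ norms. First I would use assumption (v1), i.e.\ $[v]_{\mathrm{Lip}} < +\infty$, to write
\[
|\mathcal{A}[X_1](z) - \mathcal{A}[X_2](z)| \leq [v]_{\mathrm{Lip}}\,|W_{X_1}(z) - W_{X_2}(z)|.
\]
The $\lambda z$ term cancels in the difference $W_{X_1}(z)-W_{X_2}(z)$, leaving only the integral of differences of $U$. Since $U$ agrees with $V$ on $(-\infty,0]$ and is constant on $(0,+\infty)$, it is globally Lipschitz with $[U]_{\mathrm{Lip}} = [V]_{\mathrm{Lip}((-\infty,0])}$ (this is the point of introducing $U$ in place of $V$, bypassing the jump discontinuity). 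Therefore
\[
|W_{X_1}(z) - W_{X_2}(z)| \leq [U]_{\mathrm{Lip}}\int_0^1 \bigl(|X_1(z)-X_2(z)| + |X_1(\zeta)-X_2(\zeta)|\bigr)d\zeta.
\]

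The second step is to integrate or take essential suprema. For $p = +\infty$, directly taking $\mathrm{ess\,sup}_z$ in the pointwise bound and observing that $\int_0^1|X_1-X_2|d\zeta \leq \|X_1-X_2\|_{L^\infty}$ gives the factor $2[v]_{\mathrm{Lip}}[U]_{\mathrm{Lip}}$. For $p < +\infty$, I would raise both sides to the $p$-th power, apply the elementary inequality $(a+b)^p \leq 2^p(a^p+b^p)$, integrate in $z$ over $[0,1]$, and recognise that the resulting two terms are $\|X_1-X_2\|_{L^p}^p$ and $\|X_1-X_2\|_{L^1}^p$ respectively. A final application of Jensen's inequality $\|X_1-X_2\|_{L^1([0,1])} \leq \|X_1-X_2\|_{L^p([0,1])}$ (which holds because the domain has unit measure) then yields
\[
\|\mathcal{A}[X_1]-\mathcal{A}[X_2]\|_{L^p}^p \leq 2^{p+1}[v]_{\mathrm{Lip}}^p[U]_{\mathrm{Lip}}^p\,\|X_1-X_2\|_{L^p}^p,
\]
and taking $p$-th roots produces the claimed constant $2^{(p+1)/p}[v]_{\mathrm{Lip}}[V]_{\mathrm{Lip}((-\infty,0])}$.

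There is no real obstacle here: the argument is a routine Lipschitz estimate for a nonlocal operator, and the only genuinely substantive input is the decomposition $V = U - \lambda H$ that replaces the discontinuous kernel $V$ by the Lipschitz function $U$ at the cost of the explicit term $-\lambda z$. That term, being independent of the field on which $\mathcal{A}$ acts, drops out of the difference and plays no role in the Lipschitz estimate (it will however matter later, when using this operator to drive the time evolution and to enforce monotonicity of $X(\cdot,t)$). I would also note explicitly that $\mathcal{A}$ maps $L^p([0,1])$ into itself since $W_X \in L^\infty([0,1])$ whenever $X \in L^p([0,1])$ (as $U$ is bounded on $\R$, being Lipschitz and constant for $x>0$), and hence $\mathcal{A}[X] = v(W_X)$ is bounded and in particular in $L^p$, so that the well-posedness claim preceding the Proposition is justified.
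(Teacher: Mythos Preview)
Your proof is correct and follows essentially the same route as the paper's: pointwise Lipschitz chaining of $v$ and $U$, then $(a+b)^p\le 2^p(a^p+b^p)$ and Jensen for finite $p$, with the $p=\infty$ case handled directly (the paper instead gestures at sending $p\to\infty$).

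One small slip in your closing paragraph: $U$ is \emph{not} bounded on $\R$ in general, since $V$ (and hence $U$) is only assumed Lipschitz on $(-\infty,0]$ and may grow linearly as $x\to-\infty$. Consequently $W_X$ need not lie in $L^\infty$ when $X\in L^p$ with $p<\infty$. The well-posedness of $\mathcal{A}:L^p\to L^p$ still holds, but via the linear-growth bound $|U(s)|\le\lambda+[U]_{\mathrm{Lip}}|s|$, which gives $|W_X(z)|\le C(1+|X(z)|+\|X\|_{L^1})$ and hence $\mathcal A[X]=v(W_X)\in L^p$. This does not affect your proof of the Lipschitz estimate itself.
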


\begin{proof}
Assume first $p$ finite. Let $X_1,X_2\in L^p([0,1])$. We have
\begin{align*}
    & \left|v(W_{X_1}(z))-v(W_{X_2}(z))\right|\leq [v]_{\mathrm{Lip}}|W_{X_1}(z)-W_{X_2}(z)|\\
    & \ \leq [v]_{\mathrm{Lip}} \int_0^1\left| U(X_1(z)-X_1(\zeta))-U(X_2(z)-X_2(\zeta))\right|d\zeta\\
    & \ \leq [v]_{\mathrm{Lip}}[U]_{\mathrm{Lip}} \left(\left|X_1(z)-X_2(z)\right| + \int_0^1\left|X_1(\zeta)-X_2(\zeta)\right|d\zeta\right).
\end{align*}
Hence,
\begin{align*}
    & \|\mathcal{A}[X_1]-\mathcal{A}[X_2]\|_{L^p([0,1])}^p = \int_0^1 \left|v(W_{X_1}(z))-v(W_{X_2}(z)\right|^p dz\\
    & \ \leq \left([v]_{\mathrm{Lip}}[U]_{\mathrm{Lip}}\right)^p\int_0^1 \left(\left|X_1(z)-X_2(z)\right| + \int_0^1\left|X_1(\zeta)-X_2(\zeta)\right|d\zeta\right)^p dz\\
    & \ \leq 2^p\left([v]_{\mathrm{Lip}}[U]_{\mathrm{Lip}}\right)^p \left(\int_0^1\left|X_1(z)-X_2(z)\right|^p dz +\int_0^1 \left(\int_0^1 \left|X_1(\zeta)-X_2(\zeta)\right|d\zeta\right)^p dz\right)\\
    & \ \leq 2^{p+1} \left([v]_{\mathrm{Lip}}[U]_{\mathrm{Lip}}\right)^p \int_0^1\left|X_1(z)-X_2(z)\right|^p dz
\end{align*}
where the last step is justified by Jensen's inequality. The case $p=+\infty$ easily follows from sending $p\rightarrow+\infty$ and is left as an exercise.
\end{proof}

\begin{prop}\label{prop:existence1}
    Let $\overline{X} \in L^p([0,1])$. Then, there  exists one and only one solution 
    \[X:[0,1]\times [0,+\infty)\rightarrow \R\,,\qquad X\in \mathrm{Lip}_{\mathrm{loc}}([0,+\infty);L^p([0,1]))\,,\] 
    to the initial value problem \eqref{eq:pseudo1}-\eqref{eq:pseudo1_initial}.
\end{prop}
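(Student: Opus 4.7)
The plan is to recast \eqref{eq:pseudo1}--\eqref{eq:pseudo1_initial} as the abstract ODE $\dot{X}(t)=\mathcal{A}[X(t)]$ on the Banach space $L^p([0,1])$ with $X(0)=\overline{X}$, and to invoke the standard Cauchy--Lipschitz--Picard theorem in Banach spaces. The crucial analytic ingredient, namely a \emph{global} Lipschitz bound for the nonlinear operator $\mathcal{A}$, has already been established in Proposition \ref{prop:operator} with explicit constant $C$ in \eqref{eq:lipschitz_constant}; combined with the elementary bound $\|\mathcal{A}[X]\|_{L^p}\leq \|\mathcal{A}[0]\|_{L^p}+C\|X\|_{L^p}$, this gives the operator linear growth on the entire space.

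First, I would pass to the Duhamel (integral) formulation $X(t)=\overline{X}+\int_0^t \mathcal{A}[X(s)]\,ds$ and introduce the operator $\Phi:C([0,T];L^p([0,1]))\to C([0,T];L^p([0,1]))$ defined by $\Phi[X](t)=\overline{X}+\int_0^t\mathcal{A}[X(s)]\,ds$. Proposition \ref{prop:operator} immediately yields
\[
\|\Phi[X_1](t)-\Phi[X_2](t)\|_{L^p([0,1])}\leq C\int_0^t \|X_1(s)-X_2(s)\|_{L^p([0,1])}\,ds,
\]
so either one picks $T<1/C$ and obtains a contraction with respect to the sup norm, or, more elegantly, one endows $C([0,T];L^p([0,1]))$ with the equivalent weighted norm $\|X\|_\alpha=\sup_{t\in[0,T]}e^{-\alpha t}\|X(t)\|_{L^p([0,1])}$ and checks that for $\alpha>C$ the map $\Phi$ is a contraction for every finite $T$. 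Banach's fixed point theorem then supplies a unique $X\in C([0,T];L^p([0,1]))$ solving the integral equation for arbitrary $T>0$. Uniqueness on $[0,T]$ follows directly from Gronwall's lemma applied to $\|X_1(t)-X_2(t)\|_{L^p([0,1])}$, and global-in-time existence is guaranteed because the Gronwall estimate $\|X(t)\|_{L^p([0,1])}\leq (\|\overline{X}\|_{L^p([0,1])}+\|\mathcal{A}[0]\|_{L^p([0,1])}\,t)e^{Ct}$ rules out any finite-time blow-up.

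Second, to upgrade the continuity in time to \emph{local Lipschitz} continuity, one observes that for $0\leq s<t\leq T$
\[
\|X(t)-X(s)\|_{L^p([0,1])}\leq \int_s^t\|\mathcal{A}[X(\tau)]\|_{L^p([0,1])}\,d\tau\leq (t-s)\sup_{\tau\in[0,T]}\|\mathcal{A}[X(\tau)]\|_{L^p([0,1])},
\]
and the supremum on the right is finite thanks to the linear growth of $\mathcal{A}$ together with the Gronwall bound on $\|X(\tau)\|_{L^p([0,1])}$ established above. This gives $X\in \mathrm{Lip}_{\mathrm{loc}}([0,+\infty);L^p([0,1]))$ and the pointwise ODE \eqref{eq:pseudo1} holds a.e.\ in $t$ as a consequence.

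I do not expect any real obstacle in this proposition: the entire difficulty caused by the jump discontinuity of the original kernel $V$ has been absorbed once and for all in the reformulation with the Lipschitz function $U$ and the compensating $-\lambda z$ term, and has been quantified by Proposition \ref{prop:operator}. Beyond that point, the argument is a textbook application of Picard iteration in a Banach space.
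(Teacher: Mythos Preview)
Your proposal is correct and follows exactly the same approach as the paper: the paper's proof consists of the single sentence ``The proof follows immediately from Theorem \ref{thm:CLP}'' (the Cauchy--Lipschitz--Picard theorem in Banach spaces), relying on the global Lipschitz bound for $\mathcal{A}$ from Proposition \ref{prop:operator}. You have simply written out in detail the standard Picard iteration/Gronwall argument that underlies that theorem, including the upgrade to $\mathrm{Lip}_{\mathrm{loc}}$ regularity in time.
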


\begin{proof}
    The proof follows immediately from Theorem \ref{thm:CLP}.
\end{proof}

We now want to prove that the unique solution $X$ provided in the previous Theorem is \emph{strictly increasing} in $z$ for all times $t>0$ provided the initial condition is non-decreasing.

\begin{prop}\label{prop:X_increasing}
    Assume $\overline{X}$ is non decreasing on $[0,1]$. Then, for all $t>0$, the solution $X$ provided in Proposition \ref{prop:existence1} is strictly increasing in $z$ and the following estimate holds
\begin{equation}\label{eq:slope_X}
    X(z_2,t)-X(z_1,t)\geq \frac{b\lambda (z_2-z_1)}{[v]_{\mathrm{Lip}}[U]_{\mathrm{Lip}}}\left(1-e^{-[v]_{\mathrm{Lip}}[U]_{\mathrm{Lip}}t}\right)\,,\quad 0\leq z_1<z_2\leq 1\,.
\end{equation}
\end{prop}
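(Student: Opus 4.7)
The plan is to fix $z_1 < z_2$ in $[0,1]$ and work with the scalar quantity $\phi(t) := X(z_2, t) - X(z_1, t)$ on pointwise right-continuous representatives of $X(\cdot, t)$. The goal is to derive a Gronwall-type differential inequality whose explicit solution delivers \eqref{eq:slope_X} and whose positivity yields strict monotonicity for $t>0$.

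First I would show that $X(\cdot, t)$ remains non-decreasing for all $t \geq 0$, i.e.\ $\phi(t) \geq 0$. Arguing by contradiction, suppose $\phi$ becomes negative and set $t_0 = \inf\{t > 0 : \phi(t) < 0\}$; by continuity $\phi(t_0) = 0$, so $X(z_1, t_0) = X(z_2, t_0)$. Then the integrands $U(X(z_i, t_0) - X(\zeta, t_0))$ in $W_X(z_1, t_0)$ and $W_X(z_2, t_0)$ coincide for every $\zeta$, giving
$$W_X(z_1, t_0) - W_X(z_2, t_0) = \lambda(z_2 - z_1) > 0.$$
The strict monotonicity of $v$ granted by (v2) then yields $\phi'(t_0) = v(W_X(z_2, t_0)) - v(W_X(z_1, t_0)) \geq b\lambda(z_2 - z_1) > 0$, contradicting the choice of $t_0$.

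With $\phi(t) \geq 0$ in hand, Lipschitz continuity of $U$ gives
$$\bigl|(W_X(z_1, t) - W_X(z_2, t)) - \lambda(z_2 - z_1)\bigr| \leq [U]_{\mathrm{Lip}}\,\phi(t),$$
so that $W_X(z_1,t) - W_X(z_2,t)$ lies in the interval $[\lambda(z_2-z_1) - [U]_{\mathrm{Lip}}\phi(t),\,\lambda(z_2-z_1) + [U]_{\mathrm{Lip}}\phi(t)]$. A short case split on the sign of $W_X(z_2) - W_X(z_1)$, applying (v2) when the arguments of $v$ are ordered favourably and the Lipschitz bound (v1) in the opposite case, and invoking the elementary observation $b \leq [v]_{\mathrm{Lip}}$, yields in every case the inequality
$$\phi'(t) \geq b\lambda(z_2 - z_1) - [v]_{\mathrm{Lip}}[U]_{\mathrm{Lip}}\,\phi(t).$$
Since $\phi(0) \geq 0$, comparison with the explicit solution $\psi(t) = \frac{b\lambda(z_2-z_1)}{[v]_{\mathrm{Lip}}[U]_{\mathrm{Lip}}}\bigl(1 - e^{-[v]_{\mathrm{Lip}}[U]_{\mathrm{Lip}}\,t}\bigr)$ of the corresponding linear ODE with zero initial datum gives $\phi(t) \geq \psi(t)$, which is precisely \eqref{eq:slope_X}; strict monotonicity of $X(\cdot, t)$ for $t > 0$ follows at once since the lower bound is strictly positive whenever $z_2 > z_1$ and $t > 0$.

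The main obstacle is the unified case analysis behind the differential inequality: the sharp constant $b$ from (v2) is only accessible when $W_X(z_2) \leq W_X(z_1)$, so that strict monotonicity of $v$ applies in the useful direction, while the opposite case must be handled by the two-sided Lipschitz bound (v1), which produces the larger coefficient $[v]_{\mathrm{Lip}}$ on the $\phi$-term; the inequality $b \leq [v]_{\mathrm{Lip}}$ is what reconciles the two one-sided estimates into a single clean bound with coefficients $b\lambda$ and $[v]_{\mathrm{Lip}}[U]_{\mathrm{Lip}}$. A secondary technical point is to work with pointwise right-continuous representatives of $X(\cdot, t)$ so that the identity $\partial_t X(z,t) = v(W_X(z,t))$ can be read pointwise in $z$, rather than merely almost everywhere, as provided by Proposition \ref{prop:existence1}.
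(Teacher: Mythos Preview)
Your argument is correct and follows essentially the same strategy as the paper's proof: a contradiction argument at the first crossing time to secure $\phi\geq 0$, then the differential inequality $\phi'\geq b\lambda(z_2-z_1)-[v]_{\mathrm{Lip}}[U]_{\mathrm{Lip}}\phi$, then a comparison/Gronwall step. The only cosmetic difference is that the paper obtains the inequality via an add-and-subtract decomposition of the $v$-difference (inserting the intermediate argument $\int_0^1 U(X(z_1,t)-X(\zeta,t))\,d\zeta-\lambda z_2$), which cleanly separates the (v2) contribution $b\lambda(z_2-z_1)$ from the (v1) contribution $-[v]_{\mathrm{Lip}}[U]_{\mathrm{Lip}}|\phi|$ and thereby avoids the case split on the sign of $W_X(z_2)-W_X(z_1)$ that you carry out.
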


\begin{proof}
    For all $t\geq 0$ and for almost every $(z_1,z_2)\in [0,1]^2$ with $z_1<z_2$, we have
\begin{align*}
    & \partial_t (X(z_2,t)-X(z_1,t)) \\
    & \ =  v\left(\int_0^1 U(X(z_2,t)-X(\zeta,t))d\zeta -\lambda z_2\right)-v\left(\int_0^1 U(X(z_1,t)-X(\zeta,t))d\zeta -\lambda z_1\right)\\
    & \ = v\left(\int_0^1 U(X(z_2,t)-X(\zeta,t))d\zeta -\lambda z_2\right)-v\left(\int_0^1 U(X(z_1,t)-X(\zeta,t))d\zeta -\lambda z_2\right)\\
    & \ +  v\left(\int_0^1 U(X(z_1,t)-X(\zeta,t))d\zeta -\lambda z_2\right)-v\left(\int_0^1 U(X(z_1,t)-X(\zeta,t))d\zeta -\lambda z_1\right)\\
    & \ \geq -[v]_{\mathrm{Lip}}[U]_{\mathrm{Lip}}\left|X(z_2,t)-X(z_1,t)\right| +b\lambda (z_2-z_1)
\end{align*}
where we have used (v1) and (v2). We claim that $X(z_2,t)\geq X(z_1,t)$ for all $t\geq 0$. Without restriction due the $C^1$ regularity of $X$ in time, assume there exist $0\leq t_1<t_2$ such that $X(z_2,t_1)=X(z_1,t_1)$ and $X(z_2,t)<X(z_1,t)$ for all $t\in (t_1,t_2)$. For $h>0$ small enough, we have, after time integration on $t\in [t_1,t_1+h]$,
\begin{align*}
    & \frac{X(z_2,t_1+h)-X(z_1,t_1+h)}{h}\geq -[v]_{\mathrm{Lip}}[U]_{\mathrm{Lip}}\frac{1}{h}\int_{t_1}^{t_1+h}\left(X(z_1,t)-X(z_2,t)\right)dt + b\lambda (z_2-z_1)\,.
\end{align*}
By sending $h\searrow 0$, recalling that $X(z_1,t)-X(z_2,t)=0$ at $t=t_1$ and in view of the time regularity of $X$, we obtain
\[\partial_t (X(z_2,t_1)-X(z_1,t_1)) \geq b\lambda (z_2-z_1) >0\,,\]
which contradicts the fact that $X(z_2,t)-X(z_1,t)$ is $C^1$ and non-increasing at time $t_1$. This proves $X(z_2,t)\geq X(z_1,t)$ for all $t\geq 0$ and for almost every $(z_1,z_2)\in [0,1]^2$ with $z_1<z_2$. This implies that for all $t\geq 0$ there exists a right-continuous representative of $X(\cdot,t)$ which is non decreasing. From now on, we shall identify $X(\cdot,t)$ with said representative.
To prove strict monotonicity, we can re-write the above estimate as
\begin{align*}
    & \partial_t (X(z_2,t)-X(z_1,t))\geq -[v]_{\mathrm{Lip}}[U]_{\mathrm{Lip}}\left(X(z_2,t)-X(z_1,t)\right) +b\lambda (z_2-z_1)\,.
\end{align*}
A simple comparison principle then yields
\begin{equation}\label{eq:comparison_smoothing}
    X(z_2,t)-X(z_1,t) \geq e^{-[v]_{\mathrm{Lip}}[U]_{\mathrm{Lip}}t}\left(X(z_2,0)-X(z_1,0)\right)+\frac{b\lambda (z_2-z_1)}{[v]_{\mathrm{Lip}}[U]_{\mathrm{Lip}}}\left(1-e^{-[v]_{\mathrm{Lip}}[U]_{\mathrm{Lip}}t}\right)\,,
\end{equation}
which easily implies \eqref{eq:slope_X}.
\end{proof}

As a consequence of Proposition \ref{prop:X_increasing}, $X(\cdot,t)$ has an  additional $BV_{\mathrm{loc}}$ regularity. Proposition \ref{prop:X_increasing} also implies a uniform $L^\infty$ estimate for positive times for the measure $\rho(\cdot,t)=T^{-1}(X(\cdot,t))$, where $T$ is the isometry defined in \eqref{eq:T_map}.

\begin{prop}\label{prop:Linfty}
Let $\rho(\cdot,t)=T^{-1}(X(\cdot,t))$ where $T$ is the isometry defined in \eqref{eq:T_map}. Then,
\begin{equation}\label{eq:Linfty_rho}
   \|\rho(\cdot,t)\|_{L^\infty(\R)}\leq \frac{[v]_{\mathrm{Lip}}[U]_{\mathrm{Lip}}}{b\lambda}\left(1-e^{-[v]_{\mathrm{Lip}}[U]_{\mathrm{Lip}}t}\right)^{-1}\,,\qquad\hbox{for all $t>0$}\,. 
\end{equation}
\end{prop}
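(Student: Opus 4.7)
The plan is to leverage the lower slope estimate \eqref{eq:slope_X} from Proposition \ref{prop:X_increasing} to obtain an upper bound on the density of the push-forward measure $\rho(\cdot,t) = (X(\cdot,t))_{\#}\mathcal{L}^1_{[0,1]}$. Set
\[
c_t := \frac{b\lambda}{[v]_{\mathrm{Lip}}[U]_{\mathrm{Lip}}}\left(1-e^{-[v]_{\mathrm{Lip}}[U]_{\mathrm{Lip}}t}\right),
\]
so \eqref{eq:slope_X} reads $X(z_2,t)-X(z_1,t)\geq c_t(z_2-z_1)$ for $0\leq z_1<z_2\leq 1$. Note $1/c_t$ is exactly the constant on the right of \eqref{eq:Linfty_rho}.

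The first step is to use the change of variables \eqref{eq:change_of_variable} to write, for any open interval $(a,b)\subset\R$,
\[
\rho((a,b),t)=\int_{\R}\mathbf{1}_{(a,b)}(x)\,d\rho(x,t)=\int_0^1\mathbf{1}_{(a,b)}(X(z,t))\,dz=\mathcal{L}^1\bigl(A_{a,b}(t)\bigr),
\]
where $A_{a,b}(t):=\{z\in[0,1]:X(z,t)\in(a,b)\}$. Because $X(\cdot,t)$ is (strictly) non-decreasing, $A_{a,b}(t)$ is an interval (possibly empty), and I claim $\mathcal{L}^1(A_{a,b}(t))\leq (b-a)/c_t$. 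Indeed, if $A_{a,b}(t)$ is nonempty set $z_1:=\inf A_{a,b}(t)$ and $z_2:=\sup A_{a,b}(t)$; by the slope estimate applied to points $z_1+\epsilon, z_2-\epsilon\in A_{a,b}(t)$,
\[
c_t(z_2-z_1-2\epsilon)\leq X(z_2-\epsilon,t)-X(z_1+\epsilon,t)\leq b-a,
\]
and sending $\epsilon\searrow 0$ yields $c_t(z_2-z_1)\leq b-a$.

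Combining, $\rho((a,b),t)\leq (b-a)/c_t$ for every open interval $(a,b)$, and by standard outer-regularity and monotone class arguments this extends to $\rho(B,t)\leq \mathcal{L}^1(B)/c_t$ for every Borel set $B$. Therefore $\rho(\cdot,t)$ is absolutely continuous with respect to $\mathcal{L}^1$ with Radon-Nikodym derivative bounded by $1/c_t$, which is precisely the estimate \eqref{eq:Linfty_rho}.

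The main conceptual point (and the only place where some care is needed) is the passage from the lower-slope bound on $X(\cdot,t)$ to the $L^\infty$ bound on the density; once the push-forward identity and the monotonicity of $X$ are in hand, this is essentially a one-line computation. No additional regularity on $X$ beyond what \eqref{eq:slope_X} gives is required, and the argument is insensitive to whether the initial datum is atomic or absolutely continuous, which is exactly the measure-to-$L^\infty$ smoothing effect the theorem is meant to express.
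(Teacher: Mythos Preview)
Your proof is correct and follows essentially the same approach as the paper: both arguments convert the lower slope bound \eqref{eq:slope_X} on $X(\cdot,t)$ into an upper bound on the $\rho$-measure of intervals, and conclude the $L^\infty$ density estimate from there. The only cosmetic difference is that you work directly with the push-forward identity $\rho((a,b),t)=\mathcal{L}^1\bigl(\{z:X(z,t)\in(a,b)\}\bigr)$, while the paper phrases the same computation through the cumulative distribution $F(\cdot,t)$ and its inverse relationship with $X(\cdot,t)$; your formulation arguably avoids the small case distinction on whether $F$ is strictly increasing.
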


\begin{proof}
For $t>0$, set $\rho(\cdot,t)=T^{-1}(X(\cdot,t))$ and let 
\[F(x,t)=\int_{-\infty}^x d\rho(\cdot,t) dy\,.\]
Since $X(\cdot,t)$ is strictly increasing, then $F(\cdot,t)$ is continuous on $\R$. Assume $x_1,x_2\in \R$ with $x_1<x_2$ and $F(\cdot,t)$ is strictly increasing on $(x_1,x_2)$. Then the following estimate easily follows
\begin{equation}\label{eq:slope_F}
    \frac{F(x_2,t)-F(x_1,t)}{x_2-x_1}\leq \frac{[v]_{\mathrm{Lip}}[U]_{\mathrm{Lip}}}{b\lambda}\left(1-e^{-[v]_{\mathrm{Lip}}[U]_{\mathrm{Lip}}t}\right)^{-1}
\end{equation}
as a consequence of \eqref{eq:slope_X}, using that $X(\cdot,t)$ restricted on $(F(x_1,t),F(x_2,t))$ is the inverse of $F(\cdot,t)$ on $(x_1,x_2)$. If $F(\cdot,t)$ is not strictly increasing on $(x_1,x_2)$, then \eqref{eq:slope_F} is clearly still satisfied. Then, \eqref{eq:slope_F} implies that the weak $L^\infty$ derivative $\rho(\cdot,t)=F_x(\cdot,t)$ satisfies the assertion.
\end{proof}

As a consequence of Proposition \ref{prop:Linfty} and due to the assumptions on $v$, the velocity field $v(W[\rho(\cdot,t)])$ is locally bounded and Lipschitz continuous for positive times and we can perform a very convenient change of variable to recover the weak measure formulation \eqref{eq:main} for $\rho$. 

\begin{prop}\label{prop:change_velocity}
    Let $\rho(\cdot,t)=T^{-1}(X(\cdot,t))$ where $T$ is the isometry defined in \eqref{eq:T_map}. Then, for $t>0$ the velocity field
    \begin{equation}\label{eq:velocity_field}
        G(x,t)=v(W[\rho(\cdot,t)])(x) = v\left(\int_\R U(x-y)d\rho(y,t) -\lambda F(x,t)\right)
    \end{equation}
    is Lipschitz continuous with respect to $x$. Moreover, 
    \begin{equation}\label{eq:change_velocity}
        \mathcal{A}[X(\cdot,t)](z)=v\left(W[\rho(\cdot,t)]\right)(X(z,t))\,.
    \end{equation}
\end{prop}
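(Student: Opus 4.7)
The proposition has two assertions to establish: first the Lipschitz regularity of $G(\cdot,t)$ for fixed $t>0$, and second the pointwise identification \eqref{eq:change_velocity} between the operator $\mathcal{A}$ acting on $X(\cdot,t)$ and the velocity field evaluated along the quantile. My plan is to handle them in this order, using as the crucial input the $L^\infty$ bound from Proposition \ref{prop:Linfty} and the strict monotonicity from Proposition \ref{prop:X_increasing}, together with the change of variables formula \eqref{eq:change_of_variable} and the observations collected in Remark \ref{rem:considerations}.

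For the Lipschitz bound on $G(\cdot,t)$, I would fix $t>0$ and note that by Proposition \ref{prop:Linfty} we have $\rho(\cdot,t)\in L^\infty(\R)$, so the cumulative distribution $F(\cdot,t)$ is Lipschitz continuous in $x$ with constant $\|\rho(\cdot,t)\|_{L^\infty(\R)}$. The convolution $x\mapsto \int_\R U(x-y)\,d\rho(y,t)$ is likewise Lipschitz in $x$, with constant bounded by $[U]_{\mathrm{Lip}}$, since $U$ is globally Lipschitz on $\R$ (as recorded after \eqref{eq:V_dec}) and $\rho(\cdot,t)$ is a probability measure. Composing with the globally Lipschitz $v$ via assumption (v1), the velocity field $G(\cdot,t)$ is Lipschitz in $x$ with an explicit constant depending on $[v]_{\mathrm{Lip}}$, $[U]_{\mathrm{Lip}}$, $\lambda$, and $\|\rho(\cdot,t)\|_{L^\infty(\R)}$.

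For the identity \eqref{eq:change_velocity}, I would compare the two arguments of $v$ term by term. The convolution part is immediate from the push-forward identity \eqref{eq:change_of_variable}: since $\rho(\cdot,t)=(X(\cdot,t))_\#\mathcal{L}^1_{[0,1]}$, one has
\[
\int_\R U(X(z,t)-y)\,d\rho(y,t)=\int_0^1 U(X(z,t)-X(\zeta,t))\,d\zeta.
\]
The remaining contribution requires showing $\lambda F(X(z,t),t)=\lambda z$, i.e. that $F(X(z,t),t)=z$ for all $z\in[0,1]$. This is where Proposition \ref{prop:X_increasing} enters: for every $t>0$ the quantile $X(\cdot,t)$ is strictly increasing, hence its push-forward $\rho(\cdot,t)$ is atom-less. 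Remark \ref{rem:considerations} then gives exactly the identity $z=F(X(z,t),t)$ for all $z\in[0,1]$ (the right-continuity of both $F(\cdot,t)$ and $X(\cdot,t)$ handles endpoints and any possible gaps in the support). Combining the two observations shows that the arguments of $v$ coincide, which yields \eqref{eq:change_velocity}.

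I do not expect a real obstacle here: the proposition is essentially a bookkeeping step that matches the abstract operator $\mathcal{A}$ to the nonlocal velocity field through the isometry $T$. The one subtle point is that without the atom-less property one would only have $z\leq F(X(z,t),t)$, so the argument genuinely relies on the smoothing effect proven in Proposition \ref{prop:X_increasing}; this is precisely the reason why the identity \eqref{eq:change_velocity} is claimed for $t>0$ and not at $t=0$.
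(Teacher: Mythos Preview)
Your proposal is correct and follows essentially the same approach as the paper: both parts are handled by first invoking Proposition~\ref{prop:Linfty} to get $\rho(\cdot,t)\in L^\infty$ (hence $F(\cdot,t)$ Lipschitz) together with the Lipschitz regularity of $U$ and $v$ for the first claim, and then using the push-forward change of variables \eqref{eq:change_of_variable} together with the strict monotonicity of $X(\cdot,t)$ from Proposition~\ref{prop:X_increasing} to obtain $F(X(z,t),t)=z$ for the second. Your closing remark on why the identity is only claimed for $t>0$ is a nice clarification that the paper leaves implicit.
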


\begin{proof}
    Since $\rho(\cdot,t)\in L^1(\R)\cap L^\infty(\R)$ for all $t>0$ from Proposition \ref{prop:Linfty}, then $U\ast \rho(\cdot,t)$ is Lipschitz continuous (recall that $U$ is Lipschitz and $\rho(\cdot,t)\in L^1(\R)$). Moreover, $F(\cdot,t)$ has its derivative $\rho(\cdot,t)$ in $L^\infty$, and therefore $F(\cdot,t)\in \mathrm{Lip}(\R)$. Since $v$ is Lipschitz by assumption, then $G(\cdot,t)$ is the composition of two Lipschitz functions and hence is Lipschitz. To prove the second statement, we compute
    \begin{align*}
         v\left(W[\rho(\cdot,t)]\right)(X(z,t)) =   v\left(\int_\R U(X(z,t)-y)d\rho(y,t) - \lambda F(X(z,t),t) \right)\,.
    \end{align*}
    Since $X(\cdot,t)$ is strictly increasing, we have the identity $F(X(z,t),t)=z$. Moreover, by changing variable $y=X(\zeta,t)$, we get
     \begin{align*}
         v\left(W[\rho(\cdot,t)]\right)(X(z,t)) =  v\left(\int_0^1 U(X(z,t)-X(\zeta,t))d\zeta -\lambda z\right) = \mathcal{A}[X(\cdot,t)](z)\,,
    \end{align*}
    which concludes the proof.
\end{proof}

We have now all we need to conclude the proof of Theorem \ref{thm:main}.
\begin{proof}[Proof of Theorem \ref{thm:main}]
Proposition \ref{prop:existence1} and \eqref{eq:change_velocity} show the existence and uniqueness of solutions to the Cauchy problem 
    \[
\begin{cases}
    \partial_t X(z,t)= v(W[\rho(\cdot,t)])(X(z,t)) & \\
    X(z,0)=\overline{X}(z)\,.
\end{cases}
    \]
    Propositions \ref{prop:X_increasing} and \ref{prop:Linfty} imply that $\rho(\cdot,t)=T^{-1}X(\cdot,t)$ belongs to $L^\infty$ for positive times. Moreover, Proposition \ref{prop:change_velocity} shows that the vector field \eqref{eq:velocity_field} is Lipschitz continuous. Therefore, Theorem \ref{thm:continuity} implies $\rho$ is a dissipative measure solution to \eqref{eq:main} in the sense of Definition \ref{def:dissipative} with $\rho_0$ as initial condition. The same Theorem implies that any other dissipative measure solution $\widetilde{\rho}$ with the same initial condition would be such that $\widetilde{X}(\cdot,t)=T\widetilde{\rho}(\cdot,t)$ satisfy \eqref{eq:pseudo1}-\eqref{eq:pseudo1_initial} with $\overline{X}=T^{-1}(\rho_0)$, but then the uniqueness of $X$ stated in Proposition \ref{prop:existence1} and the fact that $T$ is an isometry implies that $\widetilde{\rho}=\rho$. We have therefore proven the existence and uniqueness statement as well as (i). (ii) follows from Proposition \ref{prop:Linfty}. To prove \eqref{eq:stability}, we recall that as an easy consequence of Proposition \ref{prop:operator} the two quantile representations $X_{\rho_1}$ and $X_{\rho_2}$ satisfy
    \[\|X_{\rho_1}(\cdot,t)-X_{\rho_2}(\cdot,t)\|_{L^{p}([0,1])}\leq e^{Ct}\|X_{\rho_1}(\cdot,0)-X_{\rho_2}(\cdot,0)\|_{L^{p}([0,1])}\] with $C$ provided in \eqref{eq:lipschitz_constant}. In view of \eqref{eq:isometry}, we obtain the desired assertion.
\end{proof}

In case the initial condition $\rho_0\in L^\infty(\R)$ and in the special case in which $V$ is nonnegative and non decreasing on the half line $(-\infty,0]$, we can easily prove that the solution $\rho(\cdot,t)$ is uniformly bounded by the $L^\infty$ norm of the initial datum. This property was already proven in previous papers, see for example \cite{keimer_pflug_2017}. However, here we provide an alternative proof based on our quantile formulation approach. 

\begin{prop}\label{prop:Linfinity_bound}
    Assume that $V$ satisfies the additional assumption of being non-decreasing on $(-\infty,0]$ and $V>0$ on $(-\infty,0]$. Assume $\rho_0\in L^\infty(\R)$. Then, for all $t<0$,
    \[\|\rho(\cdot,t)\|_{L^\infty(\R)}\leq \|\rho_0\|_{L^\infty(\R)}\,.\]
\end{prop}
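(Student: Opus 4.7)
The plan is to translate the $L^\infty$ bound on $\rho$ into a one-sided slope bound on the quantile $X(\cdot,t)$ and then propagate it in time along the flow \eqref{eq:pseudo_main}. Since $\rho(\cdot,t)$ is absolutely continuous for $t>0$ by Proposition \ref{prop:Linfty}, the bound $\|\rho(\cdot,t)\|_{L^\infty(\R)}\leq M$ is equivalent, via the isometry $T$, to
\[X(z_2,t)-X(z_1,t)\geq \frac{z_2-z_1}{M},\qquad 0\leq z_1<z_2\leq 1.\]
Setting $M_0:=\|\rho_0\|_{L^\infty(\R)}$ and $\Psi(z_1,z_2,t):=X(z_2,t)-X(z_1,t)-(z_2-z_1)/M_0$, the target is to show that $\Psi\geq 0$, which holds at $t=0$, persists for all $t>0$.

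The key computation is to differentiate $\Psi$ in time along \eqref{eq:pseudo_main}, obtaining $\partial_t\Psi=v(W_{z_2}(t))-v(W_{z_1}(t))$, and to bound $W_{z_2}-W_{z_1}$ from above via a Fubini argument. Under the newly imposed hypothesis, $U$ is non-decreasing on $\R$ with $U'\geq 0$ almost everywhere and $\int_\R U'(s)\,ds=\lambda-V(-\infty)\leq\lambda$, using $V\geq 0$. Writing the integrals over $[0,1]$ as integrals against $\rho(\cdot,t)$ via \eqref{eq:change_of_variable} and exchanging the order of integration, one obtains
\[\int_0^1\!\bigl[U(X(z_2,t)-X(\zeta,t))-U(X(z_1,t)-X(\zeta,t))\bigr]d\zeta =\int_\R U'(s)\bigl[F_\rho(X(z_2,t)-s,t)-F_\rho(X(z_1,t)-s,t)\bigr]ds.\]
Provisionally assuming $\|\rho(\cdot,t)\|_{L^\infty}\leq M_0$, the bracket is dominated by $M_0\,(X(z_2,t)-X(z_1,t))$, giving
\[W_{z_2}(t)-W_{z_1}(t)\leq \lambda M_0\bigl(X(z_2,t)-X(z_1,t)\bigr)-\lambda(z_2-z_1) =\lambda M_0\,\Psi(z_1,z_2,t).\]
At any point where $\Psi=0$ this yields $W_{z_2}\leq W_{z_1}$, and (v2) (which forces $v$ to be non-increasing on $[0,+\infty)$, where $W_{z_i}$ take values) implies $v(W_{z_2})\geq v(W_{z_1})$, i.e., $\partial_t\Psi\geq 0$. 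Thus $\Psi$ cannot cross zero from above.

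The main obstacle will be turning this barrier argument into a rigorous proof, since the provisional bound $\|\rho(\cdot,t)\|_{L^\infty}\leq M_0$ is precisely the target. I would define $T^*:=\inf\{t>0\,:\,\|\rho(\cdot,t)\|_{L^\infty}>M_0\}$ and rule out $T^*<+\infty$ by contradiction. A technical nuisance is that $\|\rho(\cdot,t)\|_{L^\infty}=\sup_{z_1<z_2}(z_2-z_1)/(X(z_2,t)-X(z_1,t))$ need not be attained at an interior pair. To circumvent this I would repeat the computation above with a strict margin $M_0+\varepsilon$ in place of $M_0$, exploit the Lipschitz continuity of $X(\cdot,\cdot)$ in $L^p([0,1])$ supplied by Proposition \ref{prop:existence1} to propagate the strict slope inequality on an initial time interval, and then let $\varepsilon\searrow 0$. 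The whole scheme reproduces, at the quantile level, the maximum-principle computation one would formally perform on the equation $\partial_t\rho=-\rho\,v'(V\ast\rho)(U'\ast\rho-\lambda\rho)$ at a spatial maximum of $\rho$, where the non-decreasingness of $U$ and the strict monotonicity in (v2) combine to give the correct sign.
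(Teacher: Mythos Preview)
Your proposal is correct and follows the same overall strategy as the paper---translate the $L^\infty$ bound into a lower slope bound on the quantile and run a barrier argument---but the mechanics differ. The paper first simplifies \eqref{eq:pseudo_main} back to the $V$-form $\partial_t X(z,t)=v\bigl(\int_z^1 V(X(z,t)-X(\zeta,t))\,d\zeta\bigr)$ (valid since $X(\cdot,t)$ is strictly increasing for $t>0$), fixes an increment $h>0$, and looks at the first time $t_1$ and point $z_0$ where $Y_h(z_0,t_1):=\frac{X(z_0+h,t_1)-X(z_0,t_1)}{h}$ touches $R^{-1}$. The minimality of $z_0$ gives $X(z_0+h,t_1)-X(z+h,t_1)\leq X(z_0,t_1)-X(z,t_1)$ for all $z$, so the monotonicity of $V$ directly yields $\int_{z_0+h}^1 V(X(z_0+h)-X(\zeta))\,d\zeta \leq \int_{z_0}^{1-h} V(X(z_0)-X(\zeta))\,d\zeta$, and then the strict positivity $V>0$ turns this into a strict inequality against $\int_{z_0}^{1}$, producing $\partial_t Y_h(z_0,t_1)>0$---a clean contradiction with no circularity and no $\varepsilon$-margin needed.

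Your Fubini representation via $U'$ and $F_\rho$ is correct, but it requires the \emph{full} bound $\|\rho(\cdot,t)\|_{L^\infty}\leq M_0$ (at all spatial scales, not only the one touching the barrier), and it delivers only the non-strict $\partial_t\Psi\geq 0$ at $\Psi=0$. Your $\varepsilon$-margin plan does close this, though you should make the Gronwall step explicit: on $\{t:\|\rho(\cdot,t)\|_{L^\infty}\leq M_0+\varepsilon\}$ your inequality upgrades to $\partial_t\Psi_\varepsilon\geq -[v]_{\mathrm{Lip}}\lambda(M_0+\varepsilon)\,\Psi_\varepsilon$, and together with $\Psi_\varepsilon(z_1,z_2,0)\geq \frac{\varepsilon(z_2-z_1)}{M_0(M_0+\varepsilon)}$ this keeps $\Psi_\varepsilon$ strictly positive, so $\|\rho\|_{L^\infty}$ never reaches $M_0+\varepsilon$. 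Incidentally, your argument only uses $V\geq 0$ (to get $\int_\R U'\leq\lambda$), not the strict positivity $V>0$ that the paper exploits for its strict inequality; so your route in fact proves a marginally stronger statement, at the cost of a less direct argument.
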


\begin{proof}
Based on the result in Theorem \ref{thm:main}, the quantile function $X(\cdot,t)=T_{\rho(\cdot,t)}$ satisfies \eqref{eq:pseudo_main}. Since the solution $\rho(\cdot,t)$ is in $L^\infty$ for all positive times due to \eqref{eq:smoothing}, the function $X(\cdot,t)$ is strictly increasing on $[0,1]$. Hence, recalling the definition of $U$, \eqref{eq:pseudo_main} becomes
\begin{align}
    & \partial_t X(z,t)=v\left(\int_0^z U(X(z,t)-X(\zeta,t))d\zeta + \int_z^1 U(X(z,t)-X(\zeta,t))d\zeta -\lambda z\right)\nonumber\\
    & \ =v\left(\int_0^z \lambda d\zeta + \int_z^1 U(X(z,t)-X(\zeta,t))d\zeta -\lambda z\right)= v\left(\int_z^1 V(X(z,t)-X(\zeta,t))d\zeta\right)\,.\label{eq:reduced}
\end{align}
For a fixed $h>0$ and for every $z\in [0,1-h]$, consider the ratio
\[Y_h(z,t)=\frac{X(z+h,t)-X(z,t)}{h}\,.\]
For $t=0$ and for almost every $z\in [0,1-h]$, since $\overline{X}$ is strictly increasing (due to the fact that $\overline{X}=X_{\rho_0}$ and $\rho_0\in L^\infty(\R)$), we have
\[Y_h(z,0)=\frac{\overline{X}(z+h)-\overline{X}(z)}{h}\geq R^{-1}\]
with $R=\|\rho_0\|_{L^\infty(\R)}$. This is easily seen from the estimate
\[\frac{F_{\rho_0}(x+k)-F_{\rho_0}(x)}{k}\leq R\,,\quad x\in \R\,,\quad k>0\,.\]
Our goal is to prove that $Y_h(z,t)\geq R^{-1}$ for all $z\in [0,1-h]$, for all $h>0$, and for all $t\geq 0$. Let $t_1\geq 0$ be the first time at which $Y_h(z_0,t_1)=R^{-1}$ for some $z_0\in [0,1-h]$ and $Y_h(z_0,t)<R^{-1}$ for $t\in (t_1,t_2)$ for some $t_2>t_1$. Clearly, $Y_h(z,t_1)\geq Y_h(z_0,t_1)$ for all $z\in [0,1-h]$. Such inequality implies
\[
\frac{X(z_0+h,t_1)-X(z+h,t_1)}{h}\leq \frac{X(z_0,t_1)-X(z,t_1)}{h}\,.
\]
Since $V$ is non-decreasing, we get
\[V\left(X(z_0+h,t_1)-X(z+h,t_1)\right)\leq V\left(X(z_0,t_1)-X(z,t_1)\right)\,.\]
Integrating on $z\in [z_0,1-h]$ we obtain
\begin{align*}
    & \int_{z_0+h}^{1} V(X(z_0+h,t_1)-X(\zeta,t))d\zeta = \int_{z_0}^{1-h} V(X(z_0+h,t_1)-X(z+h,t))dz\\
    & \ \leq  \int_{z_0}^{1-h}V(X(z_0,t_1)-X(z,t_1))dz< \int_{z_0}^{1}V(X(z_0,t_1)-X(z,t_1))dz
\end{align*}
where we have used $V> 0$. Since $v$ is decreasing, we get
\[v\left( \int_{z_0+h}^{1} V(X(z_0+h,t_1)-X(\zeta,t_1))d\zeta\right)> v\left(\int_{z_0}^{1}V(X(z_0,t_1)-X(\zeta,t_1))d\zeta\right)\]
which implies
\[\partial_t Y_h(z_0,t)|_{t=t_1}=\partial_t \frac{X(z_0+h,t)-X(z_0,t)}{h}|_{t=t_1}> 0\]
which contradicts the assumptions on $t_1$. By the arbitrariness of $h>0$, all ratios
\[\frac{X(z_2,t)-X(z_1,t)}{z_2-z_1}\]
are controlled from below by $R^{-1}$ for all $0\leq z_1<z_1\leq 1$ and for all $t\geq 0$, which implies
\[\frac{F_{\rho(\cdot,t)}(x_2)-F_{\rho(\cdot,t)}(x_1)}{x_2-x_1}\leq R\]
for all $x_1,x_2\in \R$ with $x_1<x_2$ and for all $t\geq 0$, which in turn implies
\[\|\rho(\cdot,t)\|_{L^\infty(\R)}\leq R\]
as desired.
\end{proof}


In case $\rho_0\in L^\infty$ but $V$ is \emph{not} necessarily monotone and positive, we can still prove a uniform-in-time $L^\infty$ bound under an additional threshold assumption.

\begin{prop}\label{prop:Linfinity_bound2}
    Assume the initial condition $\rho_0$ satisfies 
    \begin{equation}\label{eq:threshold}
        \|\rho_0\|_{L^\infty(\R)}\leq \frac{[v]_{\mathrm{Lip}}[V]_{\mathrm{Lip}((-\infty,0])}}{b\lambda}\,.
    \end{equation}
    Then, for all $t>0$,
    \[\|\rho(\cdot,t)\|_{L^\infty(\R)}\leq \frac{[v]_{\mathrm{Lip}}[V]_{\mathrm{Lip}((-\infty,0])}}{b\lambda}\,.\]
\end{prop}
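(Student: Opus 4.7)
The plan is to exploit the comparison estimate \eqref{eq:comparison_smoothing} already established in the proof of Proposition \ref{prop:X_increasing}. The key observation is that the threshold in \eqref{eq:threshold} is precisely the reciprocal of the coefficient $b\lambda/([v]_{\mathrm{Lip}}[U]_{\mathrm{Lip}})$ appearing in the second term of \eqref{eq:comparison_smoothing}, recalling that $[U]_{\mathrm{Lip}}=[V]_{\mathrm{Lip}((-\infty,0])}$. Thus, as soon as the initial quantile $\overline{X}$ already has slope at least $b\lambda/([v]_{\mathrm{Lip}}[U]_{\mathrm{Lip}})$ in a suitable difference-quotient sense, the convex combination in \eqref{eq:comparison_smoothing} preserves that lower bound for all positive times, and translating back to $\rho$ yields the desired uniform $L^\infty$ bound.

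The first step is to convert the assumption $\|\rho_0\|_{L^\infty(\R)}\leq M$, with $M=[v]_{\mathrm{Lip}}[V]_{\mathrm{Lip}((-\infty,0])}/(b\lambda)$, into a lower bound on the increments of $\overline{X}=T(\rho_0)$. Since $\rho_0\in L^\infty(\R)$, the CDF $F_{\rho_0}$ is continuous and $M$-Lipschitz, so $F_{\rho_0}(\overline{X}(z))=z$ for every $z\in[0,1]$ by Remark \ref{rem:considerations}. For $0\le z_1<z_2\le 1$ this gives
\[
z_2-z_1=F_{\rho_0}(\overline{X}(z_2))-F_{\rho_0}(\overline{X}(z_1))\leq M\,(\overline{X}(z_2)-\overline{X}(z_1)),
\]
hence $\overline{X}(z_2)-\overline{X}(z_1)\geq (z_2-z_1)/M=b\lambda(z_2-z_1)/([v]_{\mathrm{Lip}}[U]_{\mathrm{Lip}})$.

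The second step is to plug this into \eqref{eq:comparison_smoothing}, which the proof of Proposition \ref{prop:X_increasing} established for every pair $z_1<z_2$ and every $t\geq 0$. With $\kappa:=[v]_{\mathrm{Lip}}[U]_{\mathrm{Lip}}$, the initial increment bound implies
\[
X(z_2,t)-X(z_1,t)\geq e^{-\kappa t}\frac{b\lambda(z_2-z_1)}{\kappa}+\frac{b\lambda(z_2-z_1)}{\kappa}(1-e^{-\kappa t})=\frac{b\lambda(z_2-z_1)}{\kappa}=\frac{z_2-z_1}{M}.
\]
That is, the lower slope bound $1/M$ is propagated in time unchanged.

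The final step repeats verbatim the argument of Proposition \ref{prop:Linfty}: since $X(\cdot,t)$ is strictly increasing with a uniform lower bound $1/M$ on every difference quotient, the corresponding CDF $F(\cdot,t)$ is $M$-Lipschitz, so its distributional derivative $\rho(\cdot,t)$ lies in $L^\infty(\R)$ with $\|\rho(\cdot,t)\|_{L^\infty(\R)}\leq M$, which is the claim. There is no genuine obstacle in this argument since Propositions \ref{prop:X_increasing} and \ref{prop:Linfty} have already done the heavy lifting; the only point that must be handled with some care is the identity $F_{\rho_0}(\overline{X}(z))=z$, which requires the continuity of $F_{\rho_0}$ and is precisely guaranteed by the $L^\infty$ assumption on $\rho_0$.
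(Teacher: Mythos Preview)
Your argument is correct and follows essentially the same route as the paper: both proofs invoke the comparison inequality \eqref{eq:comparison_smoothing}, observe that the threshold \eqref{eq:threshold} forces the initial difference quotient $(\overline{X}(z_2)-\overline{X}(z_1))/(z_2-z_1)$ to be at least $b\lambda/([v]_{\mathrm{Lip}}[U]_{\mathrm{Lip}})$, and conclude that this lower bound is preserved for all $t>0$. You are simply more explicit than the paper about the CDF-to-quantile conversion in the first step and the quantile-to-$L^\infty$ conversion in the last.
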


\begin{proof}
    From the proof of Proposition \ref{prop:X_increasing}, given $0\leq z_1<x_2\leq 1$, we deduce inequality \eqref{eq:comparison_smoothing}, which can be written as
    \[
    \frac{X(z_2,t)-X(z_1,t)}{z_2-z_1} \geq e^{-[v]_{\mathrm{Lip}}[U]_{\mathrm{Lip}}t}\left(\frac{X(z_2,0)-X(z_1,0)}{z_2-z_1}-\frac{b\lambda }{[v]_{\mathrm{Lip}}[U]_{\mathrm{Lip}}}\right)+\frac{b\lambda }{[v]_{\mathrm{Lip}}[U]_{\mathrm{Lip}}}\,.
    \]
    The assumption \eqref{eq:threshold} implies
    \[
    \left(\frac{X(z_2,0)-X(z_1,0)}{z_2-z_1}-\frac{b\lambda }{[v]_{\mathrm{Lip}}[U]_{\mathrm{Lip}}}\right)\geq 0\,,
    \]
    which provides
    \[
     \frac{X(z_2,t)-X(z_1,t)}{z_2-z_1}\geq \frac{b\lambda }{[v]_{\mathrm{Lip}}[U]_{\mathrm{Lip}}}
    \]
    and the assertion.
\end{proof}

\section{Deterministic Particle Approximation}\label{sec:DPA}

In this Section we provide some estimates for the particle scheme \eqref{eq:DPA_V} and we prove Theorem \ref{thm:DPA}. We shall work first on the solution $x_1(t),\ldots,x_N(t)$ to \eqref{eq:DPA_U}. By mimicking the results obtained in the Section \ref{sec:proof_main}, with specific reference to \eqref{eq:slope_X}, we obtain the following lemma, which is somehow a particle version of \eqref{eq:slope_X}.

\begin{lem}\label{l:reg_eff}
Let $x_0(t),\ldots,x_N(t)$ be the unique global in time solution to \eqref{eq:DPA_U}. Then, for all $0=1,\ldots,N-1$ we have
  \begin{equation}\label{eq:disc_max}
      x_{i+1}(t)-x_{i}(t)\geq \frac{b\lambda m_N} {2 [v]_{\mathrm{Lip}}[V]_{\mathrm{Lip}((-\infty,0])}}\left(1-e^{-2[v]_{\mathrm{Lip}}[V]_{\mathrm{Lip}((-\infty,0])}t}\right),\quad\mbox{ for all }t> 0.
  \end{equation}  
\end{lem}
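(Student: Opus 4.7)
The plan is to mimic closely the proof of Proposition \ref{prop:X_increasing}, transferring its structure to the discrete setting, with consecutive inter-particle distances $x_{i+1}(t)-x_i(t)$ playing the role of the continuous quantile differences $X(z_2,t)-X(z_1,t)$ with $z_2-z_1=m_N$. I would first fix $i\in\{0,\ldots,N-1\}$ and compute $\frac{d}{dt}(x_{i+1}-x_i)=v(W_{i+1})-v(W_i)$, where
\[
W_j=m_N\sum_{k=0}^N U(x_j-x_k)-m_N\lambda(j+1),
\]
and then split this difference via the intermediate quantity
\[
B:=m_N\sum_{k=0}^N U(x_i-x_k)-m_N\lambda(i+2),
\]
so that $v(W_{i+1})-v(W_i)=\bigl[v(W_{i+1})-v(B)\bigr]+\bigl[v(B)-v(W_i)\bigr]$.

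For the first bracket, the difference $W_{i+1}-B$ contains only changes of $U$ from $x_i-x_k$ to $x_{i+1}-x_k$, so using the Lipschitz continuity of $v$ and of $U$ (with $[U]_{\mathrm{Lip}}=[V]_{\mathrm{Lip}((-\infty,0])}$), together with the elementary bound $(N+1)m_N=1+1/N\leq 2$, I obtain
\[
|v(W_{i+1})-v(B)|\leq 2\,[v]_{\mathrm{Lip}}[V]_{\mathrm{Lip}((-\infty,0])}\,(x_{i+1}-x_i).
\]
For the second bracket, $W_i-B=m_N\lambda>0$, and assumption (v2) yields $v(B)-v(W_i)\geq b\lambda m_N$. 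Combining these two estimates produces the linear differential inequality
\[
\frac{d}{dt}(x_{i+1}-x_i)\geq -2[v]_{\mathrm{Lip}}[V]_{\mathrm{Lip}((-\infty,0])}(x_{i+1}-x_i)+b\lambda m_N,
\]
and a standard comparison (Grönwall-type) argument, together with the nonnegativity of $x_{i+1}(0)-x_i(0)$ inherited from the monotonicity of $\overline{X}$, delivers exactly \eqref{eq:disc_max}.

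Before all of this, I need to establish that the particles stay ordered, i.e.\ $x_i(t)\leq x_{i+1}(t)$ for all $t\geq 0$, so that the above estimates make sense. This is done by the same first-touch/contradiction argument as in Proposition \ref{prop:X_increasing}: if $t_1$ were the first time at which two consecutive particles coincide before crossing, then at $t=t_1$ the first bracket above vanishes, while the second is strictly positive, forcing $\dot{x}_{i+1}(t_1)-\dot{x}_i(t_1)>0$ and contradicting the supposed crossing.

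The main subtlety I anticipate is bookkeeping the factor of $2$: it arises solely from the fact that the sum in \eqref{eq:DPA_U} runs over $N+1$ indices rather than $N$, producing the constant $(1+1/N)$ which has to be majorised by $2$ to obtain a clean, $N$-independent rate. A second minor technical point is verifying that the arguments of $v$ lie in the regime where (v2) can be invoked; this is transparent if $V\geq 0$, since in that case the equivalent form \eqref{eq:DPA_V} shows that $v$ is evaluated at the nonnegative quantity $m_N\sum_{k>i}V(x_i-x_k)$, and otherwise it can be handled by the same extension convention implicitly used in the proof of Proposition \ref{prop:X_increasing}.
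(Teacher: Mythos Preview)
Your proposal is correct and follows essentially the same route as the paper's proof: the same add-and-subtract splitting of $v(W_{i+1})-v(W_i)$ (you insert $B$ with the $U(x_i-\cdot)$ sum and the shifted $\lambda$-coefficient, while the paper inserts the symmetric intermediate with the $U(x_{i+1}-\cdot)$ sum, but this is immaterial), the same use of (v2) for the $b\lambda m_N$ gain, the same Lipschitz bound with the factor $(N+1)m_N\leq 2$, the same first-touch contradiction for order preservation, and the same Gr\"onwall comparison to conclude \eqref{eq:disc_max}. Your remark on the applicability of (v2) is a fair caveat that the paper itself leaves implicit.
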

\begin{proof}
For all $i=0, \ldots,N-1$, we use assumptions (v1) and (v2) and compute
\begin{align*}
    &  \dot{x}_{i+1}(t)-\dot{x}_{i}(t) = v\left( m_N \sum_{j=0}^N U(x_{i+1}-x_j)-m_N\lambda (i+2)\right)\\
    & \ -v\left( m_N \sum_{j=0}^N U(x_i-x_j)-m_N\lambda (i+1)\right)\\
      &\ = v\left( m_N \sum_{j=0}^N U(x_{i+1}-x_j)-m_N\lambda (i+2)\right)-v\left( m_N \sum_{j=0}^N U(x_{i+1}-x_j)-m_N\lambda (i+1)\right)\\
     & + v\left( m_N \sum_{j=0}^N U(x_{i+1}-x_j)-m_N\lambda (i+1)\right)-v\left( m_N \sum_{j=0}^N U(x_i-x_j)-m_N\lambda (i+1)\right)\\
     & \ \geq b\lambda m_N+ v\left( m_N \sum_{j=0}^N U(x_{i+1}-x_j)-m_N\lambda (i+1)\right)-v\left( m_N \sum_{j=0}^N U(x_i-x_j)-m_N\lambda (i+1)\right)\\
     & \ \geq b\lambda m_N - \frac{N+1}{N} [v]_{\mathrm{Lip}}[V]_{\mathrm{Lip}((-\infty,0])}|x_{i+1}(t)-x_i(t)|.
\end{align*}
Now, we know that $x_{i+1}(0)-x_i(0)\geq 0$ for all $i=0,\ldots,N-1$. We claim that $x_{i+1}(t)-x_i(t)\geq 0$ for all $i=0,\ldots,N-1$ and for all $t \geq 0$. Assume this is not true, that is, there exist $0\leq t_1<t_2$ such that $x_{i+1}(t_1)-x_i(t_1)=0$ and $x_{i+1}(t)-x_i(t)<0$ on $t\in (t_1,t_2)$. By integrating both sides of the above estimate on $[t_1,t_1+h)$ with $0<h<(t_2-t_1)$ we obtain
\begin{align*}
    & \frac{x_{i+1}(t_1+h)-x_i(t_1+h)}{h}\geq b\lambda m_N-\frac{N+1}{N} [v]_{\mathrm{Lip}}[V]_{\mathrm{Lip}((-\infty,0])} \frac{1}{h}\int_{t_1}^{t_1+h} |x_{i+1}(s)-x_i(s)| ds
\end{align*}
and since $x_{i+1}(t_1)-x_i(t_1)=0$ and all $x_i$, $i=0,\ldots,N$ are continuous functions, we get, as $h\searrow 0$,
\[\frac{d^+}{dt}(x_{i+1}(t)-x_i(t))|_{t=t_1} \geq b\lambda m_N\]
which contradicts the fact that $x_{i+1}(t)-x_i(t)$ does not increase at $t=t_1$. Consequently, we get, for $N\geq 1$,
\begin{align}
    &  \dot{x}_{i+1}(t)-\dot{x}_{i}(t) \geq b\lambda m_N - 2 [v]_{\mathrm{Lip}}[V]_{\mathrm{Lip}((-\infty,0])}(x_{i+1}(t)-x_i(t))\,.\label{eq:comparison_particles}
\end{align}
Hence, a simple comparison argument implies the assertion.
\end{proof}

The result in Lemma \ref{l:reg_eff} implies that particles maintain the same order for all times. Moreover, even if initially some of them overlap, they instantaneously detach from each other, that is $x_i(t)<x_{i+1}(t)$ for all $t>0$. Recalling the computation \eqref{eq:equivalence}, we obtain as a consequence that $x_0(t),\ldots,x_N(t)$ satisfy the ODE system \eqref{eq:DPA_V}. Moreover, we are now in a position to define the discrete densities $\rho_i(t)$ and the density reconstruction $\rho^N$ as in \eqref{eq:reconstruction}, namely
\begin{equation}\label{eq:disc_den2}
\rho^N(x,t)=\sum_{i=0}^{N-1}\rho_i(t)\mathbb{1}_{I_i(t)}(x),\quad \rho_i(t) = \frac{m_N}{|I_i|(t)}, \quad\mbox{ for all }t> 0,
\end{equation}
with
\begin{equation}\label{eq:interv2}
    I_i(t)=\left[x_i(t),x_{i+1}(t)\right), \quad|I_i|(t)=x_{i+1}(t)-x_{i}(t),\quad\mbox{ for all } i=0,\ldots,N-1, \mbox{ and }t> 0\,.
\end{equation}
As a consequence of Lemma \ref{l:reg_eff}, we get
\begin{equation}\label{eq:linfty_b}
\rho_i(t)\leq \frac{2[v]_{\mathrm{Lip}}[V]_{\mathrm{Lip}((-\infty,0])}}{b\lambda\left(1-e^{-2[v]_{\mathrm{Lip}}[V]_{\mathrm{Lip}((-\infty,0])}t}\right)}, \quad\mbox{ for all }t > 0.
\end{equation}


The next result provides an improved version of Lemma \ref{l:reg_eff} under the same assumptions of Proposition \ref{prop:Linfinity_bound}.

\begin{prop}\label{prop:maximum}
    Let $\rho_0\in L^\infty(\R)$ and denote $R=\|\rho_0\|_{L^\infty(\R)}$. Assume $V$ satisfies the additional assumptions
    \begin{itemize}
        \item [(i)] $V(x)>0$ for all $x\leq 0$,
        \item [(ii)] $V$ is non-decreasing on $(-\infty,0]$.
    \end{itemize}
    Then, for all $t>0$,
    \begin{equation*}
        x_{i+1}(t)-x_{i}(t)\geq \frac{m_N}{R}\quad \mbox{ for all }t\geq 0,
    \end{equation*}    
    and consequently, for all $t\geq 0$,
    \[\|\rho^N(\cdot,t)\|_{L^\infty(\R)}\leq R\,.\]
\end{prop}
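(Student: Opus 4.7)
The plan is to mimic the strategy of Proposition \ref{prop:Linfinity_bound} at the discrete level, using the formulation \eqref{eq:DPA_V} instead of \eqref{eq:DPA_U} (which is the particle analogue of the reduced equation \eqref{eq:reduced}). The discrete gap $x_{i+1}(t)-x_i(t)$ plays the role of the ratio $Y_h(z,t)=\bigl(X(z+h,t)-X(z,t)\bigr)/h$, and the lower bound $m_N/R$ is the discrete counterpart of $R^{-1}$.

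First I would check the initial bound $\overline{x}_{i+1}-\overline{x}_i\geq m_N/R$. Since $\overline{x}_i=\overline{X}(im_N)$ with $\overline{X}=X_{\rho_0}$, and since $\rho_0\in L^\infty$ with $\|\rho_0\|_{L^\infty}=R$, the cumulative $F_{\rho_0}$ is $R$-Lipschitz; taking its pseudo-inverse transforms this into $\overline{X}(z+h)-\overline{X}(z)\geq h/R$ for all $h>0$, which gives the desired initial gap.

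The core of the argument is a first-contact argument. Define
\[
t_1=\inf\Bigl\{t>0\,:\,\min_{0\leq i\leq N-1}\bigl(x_{i+1}(t)-x_i(t)\bigr)<m_N/R\Bigr\},
\]
and assume by contradiction that $t_1<+\infty$. By continuity, there exists $i_0\in\{0,\ldots,N-1\}$ with $x_{i_0+1}(t_1)-x_{i_0}(t_1)=m_N/R$ and $x_{j+1}(t_1)-x_j(t_1)\geq m_N/R$ for all $j$. I would then work directly on \eqref{eq:DPA_V} (which is valid thanks to Lemma \ref{l:reg_eff} for $t>0$) and estimate
\[
\dot x_{i_0+1}(t_1)-\dot x_{i_0}(t_1)=v\!\left(m_N\!\!\sum_{k=i_0+2}^{N}\!V(x_{i_0+1}-x_k)\right)-v\!\left(m_N\!\!\sum_{k=i_0+1}^{N}\!V(x_{i_0}-x_k)\right).
\]
Relabeling $k\mapsto k+1$ in the first sum and comparing term by term, the key observation is that
\[
x_{i_0+1}(t_1)-x_{k+1}(t_1)=\bigl(x_{i_0}(t_1)-x_k(t_1)\bigr)+\bigl[(x_{i_0+1}-x_{i_0})-(x_{k+1}-x_k)\bigr](t_1)\leq x_{i_0}(t_1)-x_k(t_1)\leq 0
\]
for every $k\geq i_0+1$, because the bracketed correction is non-positive by minimality of the gap at $i_0$. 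Since $V$ is non-decreasing on $(-\infty,0]$, assumption (ii) yields $V(x_{i_0+1}-x_{k+1})\leq V(x_{i_0}-x_k)$ term by term. The sum in the first bracket has, however, one less term than the second, and the missing contribution $V(x_{i_0}(t_1)-x_N(t_1))$ is strictly positive by assumption (i). Consequently the argument of the first $v$ is strictly less than the argument of the second, and the strict monotonicity of $v$ from (v2) gives $\dot x_{i_0+1}(t_1)-\dot x_{i_0}(t_1)>0$. This contradicts the definition of $t_1$ in the same fashion as in the proof of Lemma \ref{l:reg_eff} (using a forward difference quotient and sending it to zero). Therefore $x_{i+1}(t)-x_i(t)\geq m_N/R$ for all $t\geq 0$ and all $i$.

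The $L^\infty$ bound for $\rho^N$ then follows immediately from the definition \eqref{eq:reconstruction}:
\[
\rho_i(t)=\frac{m_N}{x_{i+1}(t)-x_i(t)}\leq \frac{m_N}{m_N/R}=R.
\]
The main delicate point, and the only place where assumption (i) is really needed, is ensuring the strict inequality between the two arguments of $v$ at the contact time; without the positivity of $V$ on $(-\infty,0]$, the missing term could vanish (e.g.\ if $x_N$ is already far to the right) and only a non-strict inequality would be obtained, which would not close the contradiction. An alternative would be to use a strict barrier $m_N/R+\varepsilon$ and pass to the limit $\varepsilon\to 0$, but the argument above already handles this since the strict inequality from (i) and (v2) is qualitative.
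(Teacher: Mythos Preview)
Your proposal is correct and follows essentially the same approach as the paper: a first-contact time argument at the minimal gap, a term-by-term comparison of the sums in \eqref{eq:DPA_V} via the monotonicity of $V$ (your identity $x_{i_0+1}-x_{k+1}=(x_{i_0}-x_k)+[(x_{i_0+1}-x_{i_0})-(x_{k+1}-x_k)]$ is exactly the paper's telescoping step), and the strict positivity of $V$ to get a strict inequality that forces the derivative of the minimal gap to be positive. Your treatment of the initial bound and of the ``missing term'' $V(x_{i_0}-x_N)$ is in fact slightly cleaner than the paper's indexing, but the logic is identical.
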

\begin{proof}
Note that the condition is automatically satisfied for $t=0$ by construction of the initial particles distribution where $\bar{x}_i<\bar{x}_{i+1}$ for all $i=0,\ldots,N-1$. Assume that $t_1>0$ is the first time for which there exists at least one index $i\in\left\{0,\ldots,N\right\}$ such that
    \begin{equation*}
        x_{i+1}(t_1)-x_{i}(t_1)= \frac{m_N}{R}.
    \end{equation*}
In $t_1$ we have that $x_{i+1}(t_1) - x_i(t_1) \leq x_{j+1}(t_1) - x_j(t_1)$ for every $j \neq i$. In particular, for $j>i$, we have that 
\begin{align*}
    x_{i+1}(t_1) - x_{j+1}(t_1) =& \sum_{k = i+1}^{j-1} \big(x_{k}(t_1) - x_{k+1}(t_1))\big) \, + \, \big(x_{j}(t_1) - x_{j+1}(t_1))\big) \\
    \leq& \sum_{k = i+1}^{j-1} \big(x_{k}(t_1) - x_{k+1}(t_1))\big) \, + \, \big(x_i(t_1) - x_{i+1}(t_1)\big) = x_i(t_1) - x_{j}(t_1) 
\end{align*}
and since $V$ is monotone increasing on $(-\infty,0]$ we deduce that 
\[  V( x_{i+1}(t_1) - x_{j+1}(t_1)) \leq V (x_i(t_1) - x_{j}(t_1)) \quad \text{for all} \; j > i. \]
As a consequence,
\begin{align*}
    \sum_{j = i+1}^N V( x_{i+1}(t_1) - x_{j}(t_1)) &= \lambda + \sum_{j=i+2}^N V ( x_{i+1}(t_1) - x_{j}(t_1)) \\
    &= \lambda + \sum_{j = i+1}^{N-1} V ( x_{i+1}(t_1) - x_{j+1}(t_1)) \\
    &\leq \lambda + \sum_{j = i+1}^{N-1} V (x_i(t_1) - x_j(t_1)) \\
    &= \sum_{j=i}^{N-1} V (x_i(t_1) - x_j(t_1))\\
    &<\sum_{j=i}^{N} V (x_i(t_1) - x_j(t_1))
\end{align*}
where we have used $V>0$. Hence, the strict monotonicity of $v$ implies that
\begin{equation*}
    v \left( m_N\sum_{j= i+1}^N V( x_{i+1}(t_1) - x_{j}(t_1))\right)> v \left( m_N\sum_{j= i}^N V (x_i(t_1) - x_j(t_1))\right).
\end{equation*}
Assume furthermore that there exists $t_2 > t_1$ such that
     \begin{equation*}
        x_{i+1}(t)-x_{i}(t)< \frac{m_N}{R},\quad\mbox{ for all }t\in\left(t_1,t_2\right].
    \end{equation*}
The existence of $t_2$ leads a contradiction. Indeed, distance between the selected consecutive particles can only increase after time $t_1$ since 
\begin{align*}
    \frac{d}{dt}\left(x_{i+1}(t)-x_{i}(t)\right)_{t=t_1} & = v\left(m_N\sum_{j\geq i+1}V\left(x_{i+1}(t_1)-x_j(t_1)\right)\right)-v\left(m_N\sum_{j\geq i}V(x_i\left(t_1)-x_j(t_1)\right)\right)>0.
\end{align*}
\end{proof}

\begin{rem}
    \emph{
    A discrete version of Proposition \ref{prop:Linfinity_bound2} can be obtained also at the particle level. Indeed, \eqref{eq:comparison_particles} implies
    \begin{align*}
    &  \frac{x_{i+1}(t)-x_{i}(t)}{m_N} \geq e^{-2 [v]_{\mathrm{Lip}}[V]_{\mathrm{Lip}((-\infty,0])}t}\left[\frac{x_{i+1}(0)-x_i(0)}{m_N} -\frac{b\lambda }{2[v]_{\mathrm{Lip}}[V]_{\mathrm{Lip}((-\infty,0])}}\right] + \frac{b\lambda }{2[v]_{\mathrm{Lip}}[V]_{\mathrm{Lip}((-\infty,0])}}\,,
\end{align*}
and hence, assuming $\rho_0(x)\leq \frac{2[v]_{\mathrm{Lip}}[V]_{\mathrm{Lip}((-\infty,0])}}{b\lambda }$ implies 
\[\rho^N(x,t)\leq \frac{2[v]_{\mathrm{Lip}}[V]_{\mathrm{Lip}((-\infty,0])}}{b\lambda }\]
for all times.
    }
\end{rem}




The next lemma shows that the support of $\rho^N$ is uniformly bounded in $N$ on compact time intervals.  
\begin{lem}\label{l:spt_max}
For every $T\geq 0$, we have
  \begin{equation}\label{eq:spt_max}
    \sup_{t \in [0,T]}  \sup_N \, (x_{N}(t)-x_{0}(t)) < \infty.
  \end{equation}  
\end{lem}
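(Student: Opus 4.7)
The plan is to derive a linear differential inequality for the diameter
\[
D_N(t) := x_N(t) - x_0(t)
\]
of the particle configuration and close it via Gr\"onwall's lemma. The starting observation is that, since the index $k$ in \eqref{eq:DPA_V} runs from $i+1$ up to $N$, the sum is empty for $i=N$, so the rightmost particle moves at constant velocity $\dot{x}_N(t) \equiv v(0)$. The problem therefore reduces to controlling how negative $\dot{x}_0(t) - v(0)$ can be in terms of the current spread $D_N(t)$.

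By Lemma \ref{l:reg_eff} the particles remain ordered, so $x_0(t) - x_k(t) \leq 0$ for every $k = 1,\dots,N$ and every $t \geq 0$, and the arguments of $V$ appearing in the equation for $\dot{x}_0$ always lie in $(-\infty,0]$. Combining (V2) with the Lipschitz bound (V3) gives the pointwise growth estimate $|V(x)| \leq \lambda + [V]_{\mathrm{Lip}((-\infty,0])}|x|$ for $x \leq 0$, and using the Lipschitz continuity of $v$ from (v1) I would estimate
\[
\dot{D}_N(t) \,\leq\, [v]_{\mathrm{Lip}}\, m_N\sum_{k=1}^N \bigl(\lambda + [V]_{\mathrm{Lip}((-\infty,0])}(x_k(t)-x_0(t))\bigr) \,\leq\, A + B\, D_N(t),
\]
with $A = [v]_{\mathrm{Lip}}\lambda$ and $B = [v]_{\mathrm{Lip}}[V]_{\mathrm{Lip}((-\infty,0])}$, after exploiting $N m_N = 1$ and the elementary estimate $x_k(t) - x_0(t) \leq D_N(t)$.

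To close the argument, I would invoke the compactness of $\mathrm{supp}(\rho_0)$: it provides the $N$-uniform initial bound $D_N(0) = \overline{x}_N - \overline{x}_0 \leq \mathrm{diam}(\mathrm{supp}(\rho_0))$, from which Gr\"onwall's inequality yields
\[
D_N(t) \,\leq\, e^{Bt} D_N(0) + \tfrac{A}{B}\bigl(e^{Bt}-1\bigr) \qquad \text{for all } t \in [0,T],\ N \in \mathbb{N},
\]
and this immediately proves \eqref{eq:spt_max}. I do not foresee any serious obstacle: the only delicate point is that $V$ need not be bounded on $(-\infty,0]$, but its sublinear growth from Lipschitz continuity is perfectly compensated by the factor $m_N = 1/N$ in the sum, once the particle ordering from Lemma \ref{l:reg_eff} is in hand.
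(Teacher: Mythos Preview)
Your argument is correct. The starting observation $\dot{x}_N \equiv v(0)$ and the reduction to controlling $\dot{x}_0$ is exactly the paper's first step, and your Gr\"onwall closure goes through as written.

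The difference lies in how the sum $m_N\sum_{k\geq 1} V(x_0-x_k)$ is estimated. The paper's proof bounds it directly by $\|V\|_{L^\infty(\R)}=\lambda$, yielding the simpler inequality $\dot D_N(t)\leq [v]_{\mathrm{Lip}}\lambda$, which integrates to $D_N(t)\leq D_N(0)+[v]_{\mathrm{Lip}}\lambda\,T$ without any Gr\"onwall step. Your route instead uses only the Lipschitz growth $|V(x)|\leq \lambda + [V]_{\mathrm{Lip}((-\infty,0])}|x|$, which produces the linear inequality $\dot D_N\leq A+BD_N$ and an exponential-in-time bound. The paper's version is sharper and shorter, but it tacitly relies on $V$ being globally bounded with $\|V\|_{L^\infty}=\lambda$, a fact that is not a direct consequence of (V1)--(V3) alone (Lipschitz on a half-line does not force boundedness). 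Your argument avoids this extra assumption at the cost of a weaker final estimate; in that sense it is the more robust of the two under the stated hypotheses.
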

\begin{proof}
    We compute
    \begin{align*}
        \dot{x}_N(t) - \dot{x}_0(t) =& v(0) -  v \left( m_N \sum_{j > 0}  V(x_0(t) - x_j(t)) \right) \\
        \leq& [v]_{\mathrm{Lip}}\left| m_N \sum_{j >0} V(x_0(t) - x_j(t))  \right| \\
        \leq& [v]_{\mathrm{Lip}}  \| V\|_{L^\infty(\R)} = [v]_{\mathrm{Lip}} \lambda, 
    \end{align*}
    thus 
    \[x_N(t) - x_0(t) = x_N(0) - x_0(0) + \int_0^t  (\dot{x}_N(s) - \dot{x}_0(s)) \, ds \leq |\mathrm{supp}\,\rho_0| + [v]_{\mathrm{Lip}} \lambda T \]
    which is bounded uniformly with respect to $N$ and $t$.
\end{proof}
For future use let us also estimate 
\begin{align}\label{eq:Lipschitz_particles}
\notag
    \left|\dot{x}_{i+1}(t) - \dot{x}_i(t) \right|=& \left|v \left( m_N \sum_{j > i+1} V(x_{i+1}(t) - x_j(t)) \right) - v \left( m_N \sum_{j > i} V(x_{i}(t) - x_j(t)) \right)\right| \\
    \notag
    \leq& [v]_{\mathrm{Lip}} m_N \left| \sum_{j > i+1} V(x_{i+1}(t) - x_j(t)) - \sum_{j > i} V(x_{i}(t) - x_j(t))  \right| \\
    \notag\leq&  [v]_{\mathrm{Lip}} \Big( m_N |V(x_i(t)-x_{i+1}(t))| + [V]_{\mathrm{Lip}((-\infty,0])} (x_{i+1}(t) - x_i(t)) \Big)\\
    \leq&  [v]_{\mathrm{Lip}}  \Big( m_N (\lambda +[V]_{\mathrm{Lip}((-\infty,0])} M(T)) + [V]_{\mathrm{Lip}((-\infty,0])}  (x_{i+1}(t) - x_i(t)) \Big)
\end{align}
where 
\begin{equation}\label{eq:MT}
    M(T)=\sup_{N\in \N}\sup_{0\leq t\leq T}|x_N(t)-x_0(t)|
\end{equation} 
which is finite due to \eqref{eq:spt_max}.


We are now ready to complete the proof of Theorem \ref{thm:DPA}.

\begin{proof}[Proof of Theorem \ref{thm:DPA}]
For clarity we present the proof in three steps.

\smallskip\noindent
\textbf{Step 1 - Compactness.}  Thanks to \eqref{eq:linfty_b}, the sequence $\rho^N$ is uniformly bounded in $\L\infty\left(\R \times [a,T]\right)$ for any fixed $a>0$.
Thus, $\rho^N$ weak$-*$ converges to $\rho_a\in \L\infty\left(\R \times [a,T]\right)$. By uniqueness for every $0<a_1<a_2$ we have that $\rho_{a_1}=\rho_{a_2}$ on $\R \times [a_2,T]$. Hence, there exists $\rho\in\L\infty\left(\R \times (0,T]\right)$ such that $\rho^N$ weak$-*$ converges to $\rho$ in $\R \times I$, for any compact subset $I\subset (0,T]$.

In addition, the limit $\rho$ is in $\L\infty\left((0,T];\L1(\R)\right)$. Indeed, for every $t\in (0,T]$ the sequence $\rho^N(t,\cdot)$ is compact in $\Prob_2(\R)$ in the $2-$Wasserstein topology since they are probability measures with uniformly compact supports, see \eqref{eq:spt_max}. Let $\nu :(0,T]\to \Prob_2(\R)$ be the curve of $\nu_t\in\Prob_2(\R)$ being the limits of $\rho^N(t,\cdot)$, then by uniqueness $\nu$ coincides with $\rho$ in $\L\infty\left(\R \times (0,T]\right)$. Hence $\rho\in \L\infty\left((0,T];\L1(\R)\right)$ and $\|\rho(t,\cdot)\|_{\L1(\R)}=1$ for a.e. $t\in (0,T]$.

\smallskip
\textbf{Step 2 - Convergence.}
We now prove that for every $\varphi \in C_c^{\infty}((0,T)\times\R)$
\begin{equation}\label{eq:weak_sol_disc}
  \int_0^T \int_{\R} \rho^N(x,t) \partial_t \varphi(x,t)\,dx\,dt + \int_0^T \int_{\R} \rho^N(x,t)v( V \ast \rho^N(x,t)) \partial_x \varphi(x,t) \,dx\,dt,
\end{equation}
converges to
    \begin{equation}\label{eq:weak_sol}
\int_{0}^T\int_{\R}\rho(x,t)\partial_t\varphi(x,t)\,dx\,dt+\int_{0}^T\int_{\R}\rho(x,t)v(V\ast\rho)\partial_x\varphi(x,t)\,dx\,dt\,,
    \end{equation}
as $N\to \infty$ where $\rho$ is the limit obtained in Step 1. Recall that $\rho^N$ converges weak-$*$ in $L^{\infty}(\R \times I_\varphi)$ to $\rho$, where $I_\varphi\subset (0,T)$ denotes the smallest interval containing the support of $\varphi$ in time. Thus, for every $f \in L^1(\R \times I_\varphi)$ it is true that
\[ \iint_{\R \times I_\varphi} \rho^N(x,t) f(x,t) dt\,dx \; \to \; \iint_{\R \times I_\varphi} \rho(x,t) f(x,t) dt\,dx \quad \text{as} \quad N \to \infty.  \]
The convergence of the first integral in \eqref{eq:weak_sol_disc} easy follows from the previous consideration, due to the regularity of the test function. Concerning the second integral, we estimate
\begin{align*}
&\left|\int_{0}^T\int_{\R}\rho^N(x,t)v(V\ast\rho^N)\partial_x\varphi(x,t)\,dx\,dt-\int_{0}^T\int_{\R}\rho(x,t)v(V\ast\rho)\partial_x\varphi(x,t)\,dx\,dt\right|\\
\leq& \left| \int_{0}^T\int_{\R}\rho^N(x,t) \Big(v(V\ast\rho^N)  - v(V\ast\rho)\Big)\partial_x\varphi(x,t)\,dx\,dt \right| \\
& + \left|  \int_{0}^T\int_{\R}  \Big( \rho^N(x,t) - \rho(x,t) \Big)  v(V\ast\rho)\partial_x\varphi(x,t)\,dx\,dt \right| \\
\leq& \| \rho^N\|_{L^\infty(\mathrm{supp}(\varphi))}[v]_{\mathrm{Lip}} \int_{0}^T\int_{\R}|\partial_x \varphi(x,t)| \left| \int_{\R} V(y-x) \big( \rho^N(y,t) - \rho(y,t)\big) dy \right|\, dx\, dt \\
&+ \left|  \int_{0}^T\int_{\R}  \Big( \rho^N(x,t) - \rho(x,t) \Big)  v(V\ast\rho)\partial_x\varphi(x,t)\,dx\,dt \right|.
\end{align*}
We analyse separately the last two lines of the above chain of inequalities. Notice that the last line converges to $0$ since we can use the weak-$*$ convergence with $f = v(V\ast\rho)\partial_x\varphi$ (which is an $L^1$ function since $v$ is bounded on the support of $\varphi$). 
Moreover, 
by Lemma \ref{l:spt_max} there exists a bounded set $K\subset\R$ such that $\spt(\rho^N)\cup\spt(\rho)\subset K$ and we find for a.e. $(x,t)$ in $\R\times I_\varphi$ that
\[   \int_{\R} V(y-x) \big( \rho^N(y,t) - \rho(y,t)\big) dy=\int_{K} V(y-x) \big( \rho^N(y,t) - \rho(y,t)\big) dy \, \to \, 0 \quad \text{as} \quad N \to \infty;   \]
then 
\[ |\partial_x\varphi(x,t)| \left| \int_{K} V(y-x) \big( \rho^N(y,t) - \rho(y,t)\big) dy\right|  \, \to \, 0 \quad \text{pointwise a.e. in $\R\times I_\varphi$} \]
and it is immediate to see that 
\[  |\partial_x\varphi(x,t)| \left| \int_{\R} V(y-x) \big[ \rho^N(y,t) - \rho(y,t)\big] dy\right| \leq 2 C\left(([V]_{\mathrm{Lip}((-\infty,0])},\lambda,M(T)\right)  \| \rho\|_{L^{\infty}(\R\times I_\varphi)} |\partial_x \varphi (x,t)|   \]
where $M(T)$ is defined in \eqref{eq:MT} and the r.h.s. is a measurable function on $\R \times (0,T)$. Then Lebesgue dominated convergence Theorem ensures that 
\[ \int_0^T \int_{\R} |\partial_x\varphi(x,t)| \left| \int_{\R} V(y-x) \big( \rho^N(y,t) - \rho(y,t)\big) dy\right| \, dx \, dt \,\to \, 0 \quad \text{as} \quad N \to \infty.   \]

\smallskip
\textbf{Step 3 - Consistency.}
We are left to check that $\rho^N$ are approximate solutions of \eqref{eq:weak_sol}. To do this we have to show that \eqref{eq:weak_sol_disc} vanishes as $N \to \infty$. 
Observe that
\[
\dot{\rho}_i(t)=-m_N\rho_i(t)^2(\dot{x}_{i+1}(t)-\dot{x}_i(t)) = -\rho_i(t)\frac{\dot{x}_{i+1}(t)-\dot{x}_i(t)}{x_{i+1}(t)-x_i(t)}\,,
\]
and, by applying twice an integration by parts, we may write 
\begin{align}\label{eq:partial_t}
\notag
    \int_0^T \int_{\R} &\rho^N(x,t) \partial_t \varphi(x,t) \,dx\,dt \\
    \notag
    =& \sum_{i=0}^{N-1} \int_0^T \rho_i(t) \left( (\dot{x}_{i+1}(t) - \dot{x}_i(t)) \klintmed_{x_i(t)}^{x_{i+1}(t)} \!\!\!\!\varphi(x,t)\,dx -   \big( \varphi(t,x_{i+1}(t)) \dot{x}_{i+1}(t) - \varphi(t,x_i(t)) \dot{x}_i(t) \big) \right) \, dt \\
    \notag
    =& \sum_{i=0}^{N-1} \int_0^T \rho_i(t) (\dot{x}_{i+1}(t) - \dot{x}_i(t)) \left(  \klintmed_{x_i(t)}^{x_{i+1}(t)} \!\!\!\!\varphi(x,t)\,dx - \varphi(t,x_{i+1}(t)) \right) \, dt \\
    &- \sum_{i=0}^{N-1} \int_0^T \rho_i(t) \dot{x}_i(t) (\varphi(t,x_{i+1}(t)) - \varphi(t,x_i(t)))\, dt 
\end{align}
and the first of the two terms above actually vanishes on its own when the number of particles grows to $+\infty$. Indeed, thanks to \eqref{eq:Lipschitz_particles}, 
we estimate
\begin{align*}
    &\left|\sum_{i=0}^{N-1} \int_0^T \rho_i(t) (\dot{x}_{i+1}(t) - \dot{x}_i(t)) \left(  \klintmed_{x_i(t)}^{x_{i+1}(t)} \!\!\!\!\varphi(x,t)\,dx - \varphi(t,x_{i+1}(t)) \right) \, dt\right| \\
    \leq& \sum_{i=0}^{N-1} \int_0^T \rho_i(t)  \left|\dot{x}_{i+1}(t) - \dot{x}_i(t)\right| \int_{x_i(t)}^{x_{i+1}(t)}|\partial_x \varphi(x,t)|\,dx\,dt \\
    \leq& m_N [v]_{\mathrm{Lip}} \left((\lambda+[V]_{\mathrm{Lip}((-\infty,0])} M(T))\|\rho\|_{L^\infty(\R \times I_\varphi)}+[v]_{\mathrm{Lip}}\right)\sum_{i=0}^{N-1} \int_0^T  \int_{x_i(t)}^{x_{i+1}(t)}|\partial_x \varphi(x,t)|\,dx\,dt\\
     = & m_N [v]_{\mathrm{Lip}}\left((\lambda+[V]_{\mathrm{Lip}((-\infty,0])} M(T))\|\rho\|_{L^\infty(\R \times I_\varphi)}+[V]_{\mathrm{Lip}((-\infty,0])}\right) \|\partial_x \varphi\|_{\L1\left(\R\times(0,T)\right)}
\end{align*}
which goes to $0$ with $m_N \to 0$ as $N \to \infty$. 
To conclude we are left to show that the second term in \eqref{eq:partial_t} cancels with \begin{equation*}
    \int_0^T \int_{\R} \rho^N(x,t) v( V \ast \rho^N(x,t)) \partial_x \varphi(x,t) \, dx \, dt\,
\end{equation*}
for large $N$. By substituting \eqref{eq:DPA_V} in the second term of \eqref{eq:partial_t} we have
\begin{align*}
&  \sum_{i=0}^{N-1} \int_0^T \rho_i(t) \dot{x}_i(t) (\varphi(t,x_{i+1}(t)) - \varphi(t,x_i(t)))\, dt -\int_0^T \int_{\R} \rho^N(x,t) v( V \ast \rho^N(x,t)) \partial_x \varphi(x,t) \, dx \, dt\\
    =&\sum_{i=0}^{N-1} \int_0^T \rho_i  \left(  \dot{x}_i \int_{x_i}^{x_{i+1}} \partial_x \varphi \, dx  - \int_{x_i}^{x_{i+1}} v\left( \sum_{j=0}^{N-1} \rho_j \int_{x_j}^{x_{j+1}} V(x-y) \, dy \right) \partial_x \varphi\, dx\right)\, dt \\
    =& \sum_{i=0}^{N-1} \int_0^T \rho_i \int_{x_i}^{x_{i+1}} \partial_x \varphi \left( v\left( m_N \sum_{j>i} V(x_i - x_j) \right) -  v\left( \sum_{j\geq i} \rho_j \int_{x_j}^{x_{j+1}} V(x-y) \, dy \right) \right) \, dx\, dt. \end{align*}
The Lipschitz regularity of $v$ allows to estimate the absolute value of the above quantity by
    \begin{align*}
    & [v]_{\mathrm{Lip}} \sum_{i=0}^{N-1} \int_0^T \rho_i(t) \int_{x_i(t)}^{x_{i+1}(t)} |\partial_x\varphi(x,t)|  \sum_{j>i} \rho_j(t) \int_{x_j(t)}^{x_{j+1}(t)}  |V(x_i(t) - x_j(t)) - V(x-y)| \, dy\, dx\, dt.   \\
    & + [v]_{\mathrm{Lip}} \sum_{i=0}^{N-1} \int_0^T \rho_i(t) \int_{x_i(t)}^{x_{i+1}(t)} |\partial_x\varphi(x,t)|    \rho_i(t) \int_{x_i(t)}^{x_{i+1}(t)}  | V(x-y)| \, dy\, dx\, dt:=I_1+I_2.  \end{align*}
Using now Lipschitz regularity of $V$ and the uniform estimate for the support in \eqref{eq:spt_max}, we can deduce the following bound for $I_1$
    \begin{align*}
    I_1\leq& [v]_{\mathrm{Lip}} [V]_{\mathrm{Lip}((-\infty,0])}\sum_{i=0}^{N-1} \int_0^T \rho_i(t) \int_{x_i(t)}^{x_{i+1}(t)} |\partial_x\varphi(x,t)|  \sum_{j>i} m_N \big( |x_i(t) - x| + |x_j(t) - x_{j+1}(t)| \big)\, dx\, dt \\
    \leq&  [v]_{\mathrm{Lip}} [V]_{\mathrm{Lip}((-\infty,0])} \|\partial_x \varphi\|_{L^\infty(\R \times (0,T))}  \sum_{i=0}^{N-1} \int_0^T m_N \Big(  |x_{i}(t) - x_{i+1}(t)| + m_N |x_N(t) - x_0(t)| \Big)\, dt \\
    \leq&  2[v]_{\mathrm{Lip}} [V]_{\mathrm{Lip}((-\infty,0])} \|\partial_x \varphi\|_{L^\infty(\R \times (0,T))} T  \sup_{t \in [0,T]} \sup_N |x_N(t) - x_0(t)|   m_N,
\end{align*}
which again converges to $0$ as $m_N \to 0$. Finally, for $I_2$ we have
    \begin{align*}
    I_2\leq&  (\lambda+ [V]_{\mathrm{Lip}((-\infty,0])}M(T)) m_N [v]_{\mathrm{Lip}} \sum_{i=0}^{N-1} \int_0^T \rho_i(t) \int_{x_i(t)}^{x_{i+1}(t)} |\partial_x\varphi(x,t)|     \\
    \leq& (\lambda+ [V]_{\mathrm{Lip}((-\infty,0])}M(T)) m_N [v]_{\mathrm{Lip}}  T\|\partial_x \varphi\|_{L^1(\R \times (0,T))} \|\rho\|_{L^\infty(\R \times I_\varphi)},
\end{align*}
which vanishes as $m_N \to 0$.

\smallskip
\textbf{Step 4 - Initial condition.} 
The uniform control on the support of $\rho^N$ and the continuity in time of the particle trajectories easily imply that the curve $[0,T]\ni t\mapsto \rho^N(\cdot,t)$ is equicontinuous w.r.t. $N$ in the $p$-Wasserstein distance as $t\searrow 0$ for finite $p$. Hence, one can easily extract a subsequence of $\rho^N$ with a limit in $C([0,\delta];\Prob_q(\R))$ for some $q>1$. Such a limit, by construction, can only coincide with the limit as $N\rightarrow+\infty$ of $\rho^N(\cdot,0)$ at $t=0$, which is the initial condition $\rho_0$.
\end{proof}

\appendix
\section{Some technical results}\label{app}
We recall the following classical result.
\begin{thm}[Cauchy-Lipschitz-Picard \cite{brezis_FA}]\label{thm:CLP}
    Let $p\in [1,+\infty]$ and let $A:L^p([0,1])\rightarrow L^p([0,1])$ be a Lipschitz continuous operator. Let $\overline{X}\in L^p([0,1])$. Then, there exists one and only one $C^1$ curve $X(\cdot):[0,+\infty)\rightarrow L^p([0,1])$ such that
    \begin{equation}\label{eq:ODE_L2}
        \partial_t X(t) = A[X(t)]\qquad \hbox{for almost every $t\geq 0$}
    \end{equation}
    and $X(0)=\overline{X}$.
\end{thm}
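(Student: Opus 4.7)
The plan is to recast the differential equation \eqref{eq:ODE_L2} as the fixed point problem
\[X(t) = \overline{X} + \int_0^t A[X(s)]\,ds\]
in the Banach space $\mathcal{X}_T = C([0,T]; L^p([0,1]))$ for some $T > 0$ and apply Banach's contraction principle. Rather than iterating a local existence result, I would equip $\mathcal{X}_T$ with the Bielecki-type weighted norm
\[\|X\|_{T,\kappa} = \sup_{t \in [0,T]} e^{-\kappa t}\|X(t)\|_{L^p([0,1])},\]
for $\kappa > [A]_{\mathrm{Lip}}$, which is equivalent to the standard sup norm but will make the integral operator a contraction for any $T$.

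The central step is to show that the map
\[\Phi[X](t) := \overline{X} + \int_0^t A[X(s)]\,ds\]
sends $\mathcal{X}_T$ into itself and is a strict contraction in $\|\cdot\|_{T,\kappa}$. Using the Lipschitz property of $A$, a direct computation yields
\[e^{-\kappa t}\|\Phi[X_1](t) - \Phi[X_2](t)\|_{L^p} \leq [A]_{\mathrm{Lip}} \int_0^t e^{-\kappa(t-s)} e^{-\kappa s}\|X_1(s) - X_2(s)\|_{L^p}\,ds,\]
from which taking the supremum on $t \in [0,T]$ gives $\|\Phi[X_1] - \Phi[X_2]\|_{T,\kappa} \leq \frac{[A]_{\mathrm{Lip}}}{\kappa}\|X_1 - X_2\|_{T,\kappa}$, a strict contraction by the choice of $\kappa$. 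Banach's theorem produces a unique fixed point $X \in \mathcal{X}_T$, and since $T$ is arbitrary, by uniqueness on each interval the solutions on $[0,T_1]$ and $[0,T_2]$ with $T_1 < T_2$ coincide on their common domain, so one obtains a unique global curve $X: [0,+\infty) \to L^p([0,1])$.

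The $C^1$ regularity of $X$ in $t$ follows at no extra cost: since $X$ is continuous and $A$ is Lipschitz (hence continuous), $s \mapsto A[X(s)]$ is continuous in $L^p([0,1])$, and therefore the Bochner integral on the right-hand side of the fixed point relation is $C^1$ in $t$ with derivative $A[X(t)]$, yielding \eqref{eq:ODE_L2} pointwise (not merely almost everywhere). Uniqueness of $C^1$ solutions satisfying \eqref{eq:ODE_L2} could alternatively be recovered via a direct Gr\"onwall argument: for two solutions $X_1,X_2$, the difference satisfies $\|X_1(t)-X_2(t)\|_{L^p} \leq [A]_{\mathrm{Lip}} \int_0^t \|X_1(s)-X_2(s)\|_{L^p}\,ds$, forcing $X_1 \equiv X_2$.

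The only subtle point I anticipate is the case $p=+\infty$, where $L^\infty([0,1])$ is non-separable and Bochner integration requires strong measurability. This is not a real obstacle, because the Picard iterates $\Phi^n[\overline{X}]$ are continuous curves in $L^p$ and so have separable image in $L^\infty$; thus the integral is well defined throughout the argument. With this caveat handled, the proof reduces to the classical Picard scheme and the textbook reference \cite{brezis_FA} provides the detailed verification.
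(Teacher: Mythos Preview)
Your proof is correct and is essentially the standard Picard--Banach argument (with the Bielecki weighted norm to get global existence in one shot). There is nothing to compare against: the paper does not prove Theorem~\ref{thm:CLP} at all, it merely records it as a classical fact and cites \cite{brezis_FA}. Your write-up therefore supplies strictly more than the paper does, and the remark on the $p=\infty$ case (strong measurability via continuous, hence separably-valued, Picard iterates) is a nice touch that the textbook reference glosses over.
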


Next, we provide the proof of Theorem \ref{thm:primitive}.
\begin{proof}[Proof of Theorem \ref{thm:primitive}]
    For a given test function $\psi(x,t)$, $\psi\in C^2_c(\R\times (0,+\infty))$, we have
    \[\int_0^{+\infty}\int_\R F(x,t)\left(\psi_t(x,t)+(G(x,t)\psi(x,t))_x\right)dxdt = 0\,.\]
 Now, let $\varphi\in C_c^1(\R\times (0,+\infty))$ and set $\psi(x,t)=\varphi_x(x,t)$. Defining the measure $\rho(\cdot,t)=\partial_x F(\cdot,t)$ and integrating by parts we get
 \[\int_0^{+\infty}\int_\R \left(\varphi_t(x,t)+G(x,t)\varphi_x(x,t)\right)d\rho(\cdot,t)(x) dt = 0\,,\]
 which proves $\rho$ satisfies the weak measure formulation of \eqref{eq:CE} for positive times.
\end{proof}

\section*{Acknowledgements}
The authors acknowledge useful discussions on this topic with Felisia A. Chiarello. The three authors are partially supported by the Italian “National Centre for HPC, Big Data and Quantum Computing” - Spoke 5 “Environment and Natural Disasters”. The research of MDF and SF is supported by the InterMaths Network, \url{www.intermaths.eu}, and the Ministry of University and Research (MIUR), Italy under the grant PRIN 2020- Project N. 20204NT8W4, Nonlinear Evolutions PDEs, fluid
dynamics and transport equations: theoretical foundations and applications.
ER and SF are supported by the INdAM project N.E53C22001930001 ``MMEAN-FIELDSS''. ER is also supported by  University of L’Aquila 2024 project 
"Deterministic particle schemes for crowd dynamics on networks and multidimensional domains" (04ATE2024.RIC.RADICI).

\bibliographystyle{plain}

\end{document}